\documentclass[oneside,11pt]{amsart}
\usepackage[english]{babel}
\usepackage{bbm,euscript,amssymb,amsthm,amsmath}
\usepackage{mathrsfs}
\usepackage{amsfonts}
\usepackage{amsmath}
\usepackage{amssymb}
\usepackage{fancybox}
\usepackage{graphicx}
\usepackage{color}
\usepackage{lineno}
\usepackage{pstricks,pstricks-add,pst-math,pst-xkey}
\usepackage{mathabx}
\usepackage{rotating}

\usepackage{amsmath,amsfonts,amssymb,amsthm,graphicx,graphics,epsf}

\usepackage{mathcomp}
\usepackage{textcomp}

\newtheorem{thm}{Theorem} 
\newtheorem{thm*}{Theorem*}
\newtheorem{lemma}[thm]{Lemma} 
\newtheorem{theorem}[thm]{Theorem}

\newtheorem{conj}[thm]{Conjecture} 
\newtheorem{proposition}[thm]{Proposition}

\newtheorem{corollary}[thm]{Corollary}
\newtheorem{remark}[thm]{Remark} 
\newtheorem{claim}[thm]{Claim}

\def\proba{{\rm Pr}}
\def\c3n{{C_3\ \Box\ C_n}}

\def\FF{{\mathcal F}}

\def\VA{{\overline{\epsilon}}}

\def\eps{{c}}

\def\ee{{\mathcal E}}

\def\flong{{f_{\text{\rm\tiny long}}}}
\def\Flong{{F_{\text{\rm\tiny long}}}}

\def\fshort{{f_{\text{\rm\tiny short}}}}
\def\Fshort{{F_{\text{\rm\tiny short}}}}

\def\oG{{H}}
\def\oF{{F_H}}
\def\oA{{{F_H'}}}

\def\digonal{{clean}}

\def\anchor{{anchor}}

\def\Fprime{{F'}}

\def\oDelta{{\overline{\Delta}}}

\def\dd{{\mathcal D}}

\def\qu{{|E_W|}}

\def\xcoone{{\mathbf c}_1}
\def\xcoonenum{10^{-10}}
\def\xcotwo{{\mathbf c}_2}
\def\xcotwonum{10^{-5}}
\def\xcoSevennum{(10^{-5}+2)}

\def\xcothree{{\mathbf c}_3}
\def\xcofou{{\mathbf c}_4}
\def\xcofiv{{\mathbf c}_5}
\def\xcosix{{\mathbf c}_6}
\def\xcosevnobrackets{{1/2}}
\def\xcosev{{(1/2)}}
\def\xcoeig{{(1/2)(\Delta\ell+\ell^2)}}
\def\xcoeigzero{{(1/2)(\Delta_0\ell_0+\ell_0^2)}}
\def\xcoeigtwothirds{{(2/3)(\Delta\ell+\ell^2)}}
\def\xcoeigzero{{(1/2)(\Delta_0\ell_0+\ell_0^2)}}

\def\xcoSeven{{\mathbf c}_7}

\def\xcosixteen{{s}}

\def\zcoone{{\mathbf z}_1}

\def\Fwhite{{F_{\circ}}}
\def\fwhite{{f_{\circ}}}
\def\Fblack{{F_{\bullet}}}
\def\fblack{{f_{\bullet}}}

\def\somekon{{2\somelen(2\somelen+1)}}

\def\topedge{{branch}}

\def\somelen{{\ell_0}}
\def\somedeg{{\Delta_0}}
\def\sse{{(\somelen,\somedeg)}}
\def\earr{{\Xi}}

\def\ignore#1{{ }}

\def\ucr{{\hbox{\rm cr}}}

\def\real{\bbbR}
\def\bbbR{{\mathbbm R}}
\def\bbbE{{\mathbbm E}}
\def\EE{{\bbbE}}
\def\bbbX{{\mathbbm X}}

\def\ignore#1{}

\def\gg{{\mathscr G}}

\def\pp{{\mathscr P}}
\def\rr{{\mathscr R}}

\title[On the decay of crossing
  numbers of sparse graphs]{On the decay of crossing
  numbers \\ of sparse graphs}

\author{J\'ozsef Balogh}
\address{University of Illinois at Urbana-Champaign, USA.}
\email{jobal@math.uiuc.edu}
\thanks{The first author was supported by NSF CAREER Grant DMS-0745185, UIUC Campus Research Board
Grant 11067, and OTKA Grant K76099.}

\author{Jesus Lea\~nos}
\address{Unidad Acad\'emica de Matem\'aticas, UAZ. Zacatecas, Mexico.}
\email{jleanos@mate.reduaz.mx}

\author{Gelasio Salazar}
\address{Instituto de F\'\i sica, UASLP. San Luis Potosi, Mexico.}
\email{gsalazar@ifisica.uaslp.mx}
\thanks{The third author was supported by CONACYT grant 106432.}

\date{\today}

\keywords{Light subgraphs, nearly-light, crossing numbers,
  crossing-critical}

\subjclass[2010]{05C07, 05C10, 05C38}

\begin{document}

\linenumbers

\begin{abstract}
Richter and Thomassen proved that every graph has an edge $e$ such
that the crossing number $\ucr(G-e)$ of $G-e$ is at least
$(2/5)\ucr(G) - O(1)$. Fox and Cs.~T\'oth proved that dense graphs have
large sets of edges (proportional in the total number of edges) whose
removal leaves a graph with crossing number proportional to the
crossing number of the original graph; this result was later
strenghtened by \v{C}ern\'{y}, Kyn\v{c}l and
G.~T\'oth. These results make our understanding
of the {decay} of crossing numbers in dense graphs essentially
complete.
In this paper we prove a similar result for large sparse graphs in which the number of edges
is not artificially inflated by operations such as edge subdivisions.
We also discuss the connection between the decay of crossing
numbers and expected crossing numbers, a concept recently introduced
by Mohar and Tamon.
\end{abstract}

\maketitle

\section{Introduction}\label{sec:intro}

\ignore{
Some important graph theoretical parameters behave very robustly under
the edge removal operation. Take, for instance, the chromatic number
$\chi(G)$ of a graph $G$. For {\em any} edge $e$ of $G$, $\chi(G-e)
\ge \chi(G) - 1$. Analogous observations hold for the genus or the
stability number of any graph. A slightly less trivial behaviour is
exhibited by parameters such as girth: it is easy to see that for all
integers $r,s$ with $s > r$, there is a graph $G$ with girth $g(G)=r$
and an edge $e$ such that $g(G) = s$. However, excluding some
uninteresting situations (such as cycles), it is
easy to show that every graph has {\em some} edge $e$ such that $g(G)
= g(G-e)$. 
}

The {\em crossing number}
$\ucr(G)$ of a graph $G$ is the minimum number of pairwise crossings
of edges in a drawing of $G$ in the plane. A graph $G$ is
$k$-{\em crossing-critical} if $\ucr(G) \ge k$, but $\ucr(G-e) < k$
for every edge $e$ of $G$.
Since
loops are totally irrelevant for crossing number purposes, all graphs under
consideration are loopless.

\subsection{The decay of crossing numbers}

In this paper we are concerned with the effect of edge removal in the
crossing number of a graph (following Fox and T\'oth~\cite{foxtoth},
this is referred to as the {\em decay of crossing
  numbers}). Richter and Thomassen~\cite{rt} proved that every graph
$G$ has some edge $e$ such that $\ucr(G-e) \ge (2/5)\ucr(G) -
37/5$. They conjectured that there always exist an edge $e$ such that
$\ucr(G-e) \ge \ucr(G) - c \sqrt{\ucr(G)}$, for some universal
constant $c$. This conjecture was proved by Fox and
T\'oth~\cite{foxtoth} for dense graphs.

Fox and T\'oth actually proved a much stronger result: the existence
of a large subset of edges whose removal leaves a graph whose crossing
number is at least a proportion of the crossing number of the original
graph. More precisely, they proved that for every fixed $\epsilon >
0$, there is a constant $n_0=n_0(\epsilon)$ such that if $G$ is a
graph with $n > n_0$ vertices and $m > n^{1+\epsilon}$ edges, then $G$
has a subgraph $G'$ with at most $(1 - \frac{\epsilon}{24})m$ edges
such that $cr(G') \ge ( \frac{1}{28} - o(1))\ucr(G)$.

This result was further strenghtened by 
\v{C}ern\'{y}, Kyn\v{c}l and
G.~T\'oth~\cite{ckt}, who proved that for every $\epsilon,\gamma>0$ there is an
$n_0=n_0(\epsilon,\gamma)$ such that if $G$ is a graph with $n > n_0$
vertices and $m> n^{1+\epsilon}$ edges, then $G$ has a subgraph $G'$
with at most $(1 - \frac{\epsilon\gamma}{1224})m$ edges
such that $cr(G') \ge (1 - \gamma)\ucr(G)$.

\subsection{The decay of crossing numbers of sparse graphs}

Due to the Fox-T\'oth and the \v{C}ern\'{y}-Kyn\v{c}l-T\'oth results, 
our understanding of the decay of
crossing numbers of dense graphs is essentially complete.  The situation
for sparse graphs is quite different. Although the Richter and
Thomassen result is fully general, it only guarantees the existence of
a single edge whose deletion leaves a graph with crossing number
substantially large. As pointed out in~\cite{foxtoth}, by combining the
following two facts one obtains an improvement to the
Richter-Thomassen result for graphs with $n$ vertices and $m > 8.1n$
edges:
(i) every graph with $m \ge \frac{103}{16}n$ satisfies $\ucr(G) \ge
0.032\frac{m^3}{n^2}$~\cite{prtt}; and (ii) for any graph $G$ and any
edge $e$ of $G$, $\ucr(G-e) \ge \ucr(G) -m + 1$~\cite{pt}. 

In this paper we investigate the decay of crossing numbers of sparse
graphs. We are particularly interested in establishing results as
similar as possible as those in~\cite{foxtoth} and~\cite{ckt}: 
the existence of large sets of edges 
whose removal leaves a graph whose crossing number is at least some
(constant) fraction of the crossing number of the original
graph. 

In contrast with dense graphs, in a sparse graph it is possible to
artificially increase the number of edges of a graph, while
maintaining its crossing number, without adding any substantial
topological feature. Consider, for instance, a graph consisting of a
large planar grid plus an additional edge $e$ joining two vertices far
apart; subdivide this additional edge $r$ times (for some integer
$r>0$) to get a path $P$, and let $G$ denote the resulting graph. For
any given $\alpha >0$, we can make $r$ sufficiently large so that any
set of at least $\alpha|E(G)|$ edges of $G$ contains at least an edge
of $P$. That is, for any set $E_0$ of at least $\alpha|E(G)|$ edges of $G$,
the crossing number of $G-E_0$ is $0$. 

This example shows that no general result can possibly be established
if we allow the number of edges to be artificially inflated. In
particular, degree $2$ vertices need to be precluded from the graphs
under consideration. This is a particular instance of a more general
way to spuriously increase the number of edges, by substituting a set
of (possibly just one) edges joining the same two vertices by a plane
connected graph, as we now describe.

We first recall the definition of a bridge.
Let $G$ be a graph, and let $u,v$ be distinct vertices of
$G$. Following Tutte, a $uv$-{\em bridge} is either a single edge
joining $u$ and $v$, together with $u$ and $v$ (in which case it is {\em trivial}), or a subgraph
of $G$ obtained by adding to a connected component $K$ of
$G\setminus\{u,v\}$ all the edges attaching $K$ to $u$ or $v$,
together with their ends.
A $uv$-bridge is {\em $uv$-planar} if it can be embedded in the
plane with $u$ and $v$ in the same face. 

Suppose that $u,v$ are distinct vertices incident with the
same face in a connected plane graph $H$ with $|V(H)| > 2$, and let $k$ be
the maximum number of pairwise edge-disjoint
$uv$-paths in $H$. We say that $(H,u,v)$ is a {\em $uv$-blob of width
  $k$}. 
Now consider a graph $G$, and let $u,v$ be vertices of $G$, joined by
$k\ge 1$ edges.
It is easy to see that we may
substitute the edges joining $u$ and $v$ by an
arbitrarily large $uv$-blob of width $k$, leaving the crossing number
(and the criticality of $G$, if $G$ is critical) unchanged.
Conversely, if $G$ is a graph with a vertex cut $\{u,v\}$, and for some
$\{u,v\}$-bridge $H$ we have that $(H,u,v)$ is a $uv$-blob of width $k$, then
 $G$ may be simplified, leaving its crossing number (and its criticality, if $G$
is critical) unchanged, by substituting $H$ 
by $k$ parallel $uv$-edges.

Note that the concept of $uv$-blob captures, in particular, the
operation of edge subdivision. 
Indeed, a subdivided edge is simply a $uv$-blob of width $1$, all of
whose vertices, other than $u$ and $v$, have degree $2$. 

\subsection{The main result}

Since we are interested in proving the existence of large sets of
edges (linear in the crossing number) with a special property (their removal
does not decrease the crossing number arbitrarily), 
we need to preclude the existence of $\{u,v\}$-bridges (for any pair
$u,v$ of vertices) that are 
$uv$-blobs, since they 
inflate the number of edges of a graph, while adding no topologically
interesting structure whatsoever to the graph itself.

As it happens, such objects are the {\em only}\ structure that needs to
be avoided. A graph is {\em irreducible} if there do not exist
vertices $u,v$ and a $\{u,v\}$-bridge $H$ such that $(H,u,v)$ is a
$uv$-blob. We prove that if $G$ is irreducible,
then a large set of its edges (linear in the crossing number) may be
removed, and still leave a graph whose crossing number is at least a
fraction of the crossing number of the original graph. More precisely:


\begin{theorem}\label{thm:main2}
For each $\epsilon>0$ and each positive integer $k$ there exist
$m_0:=m_0(\epsilon,k)$ and $\gamma:=\gamma(\epsilon)$ with the following property.
Every $2$-connected irreducible graph $G$ with $\ucr(G) = k$ and at least $m_0$
edges has a set $E_0$ of at least $\gamma
k$ edges such that $\ucr(G-E_0) > (\xcosevnobrackets - \epsilon)\ucr(G)$. 
\end{theorem}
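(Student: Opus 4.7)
My plan is to delete $\gamma k$ edges of $G$ one at a time, choosing each edge so that its removal decreases $\ucr$ by at most a constant $c=c(\epsilon)$ depending only on $\epsilon$. After $t := \lfloor (1/2+\epsilon)k/c \rfloor$ such deletions, the resulting graph $G - E_0$ satisfies
$\ucr(G-E_0) \ge k - c\cdot t \ge k - (1/2+\epsilon)k > (1/2-\epsilon)k$,
as required. The deleted set $E_0$ has cardinality $t \ge \gamma k$ for $\gamma := (1/2+\epsilon)/c$, which depends only on $\epsilon$.

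To locate the deletable edges, at each pruning stage I would fix an optimal drawing $D_i$ of the current graph $G_i$ and pass to its planarization $G_i^*$: replace each of the $k_i := \ucr(G_i)$ crossings by a new $4$-valent vertex to produce a plane graph with $|V(G_i)|+k_i$ vertices and $|E(G_i)|+2k_i$ edges. Irreducibility of $G$ forbids degree-$2$ vertices and so guarantees $\delta(G_i^*) \ge 3$ (modulo mild care to preserve irreducibility under pruning). Since $|E(G_i)| \ge m_0 - \gamma k$ remains enormous compared to $k$, an Euler-formula count forces a constant fraction of the faces of $G_i^*$ to have length at most some $L = L(\epsilon)$, and hence yields $\Omega(|E(G_i)|)$ ``light'' edges of $G_i$ --- edges that are not crossings of $D_i$ and that lie on a short cycle of length at most $L$ in $G_i$. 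This is the quantitative light-subgraph input signalled by the paper's keywords.

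For each such light edge $e$ on a short cycle $C_e$, I would argue that the drop $\ucr(G_i) - \ucr(G_i - e)$ is bounded by $c = O(L)$. The reason is that any optimal drawing of $G_i - e$ can be extended to a drawing of $G_i$ by reinserting $e$ along a narrow strip parallel to the detour path $C_e - e$; because this detour has bounded length and, thanks to irreducibility, lies in a locally controlled region not inflated by planar blob attachments, the extension adds only $O(L)$ new crossings. Choosing $L = L(\epsilon)$ large enough and $\gamma$ correspondingly small gives exactly the bound required for the pruning to continue for $\gamma k$ steps.

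The main obstacle is controlling the per-step drop uniformly by $c(\epsilon)$ --- the naive reinsertion bound charges the cost against the crossings of the detour in the optimal drawing of $G_i - e$, and an adversarial drawing could route the detour through many crossings. Overcoming this requires using the structural consequences of irreducibility (the absence of $uv$-blob bridges) to argue that short cycles in $G_i$ cannot be drawn through heavily crossed regions in an optimal drawing, at least not for a positive fraction of the light edges. A further subtlety is that pruning might destroy irreducibility (by exposing a degree-$2$ vertex or a $uv$-blob); handling this either by restoring irreducibility after each step (contracting degree-$2$ vertices and simplifying blobs) or by showing that a slightly relaxed invariant still yields the per-step drop bound is the delicate technical heart of the argument.
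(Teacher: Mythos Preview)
Your proposal has a genuine gap at precisely the step you yourself flag as ``the main obstacle.''  Reinserting $e$ along the detour $C_e-e$ in an optimal drawing of $G_i-e$ incurs two kinds of new crossings: those from passing near internal vertices of the detour (bounded by $O(L\Delta)$ if the cycle is light), and those inherited from crossings already on $C_e-e$ in that drawing.  For the per-step drop to be a constant $c(\epsilon)$ you need the second contribution to be $O(1)$, but nothing in irreducibility forces this: irreducibility is a condition on the abstract graph, not on any particular drawing of $G_i-e$, and there is no mechanism preventing an optimal drawing from routing a fixed short path through $\Theta(\ucr(G_i-e))$ crossings.  The best the single-detour reinsertion yields is $\ucr(G_i)\le 2\,\ucr(G_i-e)+O(L\Delta)$, i.e.\ a \emph{fractional} drop of roughly $\tfrac12\,\ucr(G_i)$ per step.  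Iterating a fractional drop gives geometric decay, so after $t$ deletions you only know $\ucr(G_t)\gtrsim 2^{-t}k$; this lets you delete $O(1)$ edges before falling below $(\tfrac12-\epsilon)k$, not $\gamma k$.

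The paper does not iterate.  It first shows (via a light-subgraph lemma on the planarization, along the lines you sketch) that $G$ contains $r\ge\gamma k$ pairwise edge-disjoint \emph{earrings}: each is a base edge $e_i=u_iv_i$ together with \emph{two} short bounded-degree $u_iv_i$-paths $P_i,Q_i$.  All base edges $E_0=\{e_1,\dots,e_r\}$ are removed at once; in an optimal drawing of $G-E_0$ each $e_i$ is reinserted along one of $P_i,Q_i$.  A global averaging over the $2^r$ choices $(R_1,\dots,R_r)\in\prod_i\{P_i,Q_i\}$ shows that every crossing of the drawing is charged at most $2^r$ times in total, so some single choice has total inherited reinsertion cost at most $\ucr(G-E_0)$.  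This gives $\ucr(G)<2\,\ucr(G-E_0)+(\Delta\ell+\ell^2)r$ in one shot, hence $\ucr(G-E_0)>(\tfrac12-\epsilon)\,\ucr(G)$ once $\gamma$ is chosen small.  The constant $\tfrac12$ comes from this averaging over two alternative routes per edge, not from accumulating bounded single-edge drops; the second path is exactly what your single-cycle approach is missing.
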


Trivially, $3$-connected graphs are irreducible, so in particular
Theorem~\ref{thm:main2} applies to all $3$-connected graphs.

We  also apply our techniques to improve (for sufficiently large
graphs) the Richter and
Thomassen result on crossing-critical graphs. Richter and Thomassen
proved in~\cite{rt} that every graph $G$ has an edge $e$ such that
$\ucr(G-e)\ge (2/5)\ucr(G) - 37/5$.  

In order to improve on this result, again we need to be careful not to
allow the artificial inflation in the number of edges. However, we do not
need the full condition of irreducibility: it suffices to require
that 
each vertex is adjacent to at least $3$ other vertices.
A slight variant of this requirement (namely {\em $X$-minimality}) was introduced by Ding, Oporowski, Thomas, and 
Vertigan~in~\cite{dotv}, with the same motivation
of not allowing a graph with given crossing number (in their case, a
$2$-crossing-critical graph) to spuriously grow
its number of edges.


\begin{theorem}\label{thm:main3}
For each positive integer $k$, there is an integer $m_1:=m_1(k)$ with
the following property. Let $G$ be a $2$-connected graph in which each
vertex is adjacent to at least $3$ vertices.  If
$\ucr(G)=k$ and $G$ has at least $m_1$ edges, then $G$ has an edge $e$
such that $\ucr(G-e) > (2/3) \ucr(G) - 10^8$.
\end{theorem}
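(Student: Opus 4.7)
The plan is to find a single edge $e = uv$ in a locally highly-connected region of the uncrossed part of an optimal drawing of $G$, and then to show that re-insertion of $e$ into any drawing of $G - e$ is cheap, through a multi-path averaging argument. Since the bound is vacuous for $k := \ucr(G) \leq \lfloor \tfrac{3}{2}\cdot 10^8 \rfloor$, we may assume $k$ is large. Fix an optimal drawing $D$ of $G$ realizing $k$ crossings, and let $X \subseteq E(G)$ be the set of edges involved in some crossing of $D$, so that $|X| \leq 2k$. The remaining edges form an uncrossed subgraph $G_P \subseteq G$ drawn planarly in $D$. Choosing $m_1(k)$ large and applying the crossing-lemma inequality, we force $|V(G)|$ to be enormous compared to $k$, so $G_P$ is a vast planar subgraph inheriting the min-adjacency-$3$ property at all but at most $4k$ of its vertices.

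The central structural step is to extract, inside $G_P$, an edge $e = uv$ together with four pairwise edge-disjoint $uv$-paths $P_1, P_2, P_3, P_4$ in $G - e$, each contained in $G_P$. Concretely, this amounts to locating a locally $5$-edge-connected piece inside the enormous clean planar subgraph: the min-adjacency-$3$ hypothesis prevents the extraction from collapsing onto trivial structures (such as a long chain of degree-$2$ vertices), and the largeness of $G_P$ together with quantitative planar-graph tools (iterative extraction of highly edge-connected blocks, in the spirit of Mader-type arguments) suffice for $m_1(k)$ chosen large enough. The additive constant $10^8$ is there to absorb the residual losses of this extraction.

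Once $e$ is in hand, fix an optimal drawing $D^*$ of $G - e$, and for each $i$ let $c_i$ be the number of crossings in $D^*$ incident to an edge of $P_i$. Edge-disjointness of the $P_i$ gives $\sum_i c_i \leq 2\,\ucr(G-e)$, so some $j$ satisfies $c_j \leq \tfrac{1}{2}\,\ucr(G-e)$. Re-insert $e$ along a thin tubular neighborhood of the drawn curve of $P_j$ in $D^*$: this yields a drawing of $G$ with at most $\ucr(G-e) + c_j \leq \tfrac{3}{2}\,\ucr(G-e)$ crossings. Consequently
\[
\ucr(G-e) \;\geq\; \tfrac{2}{3}\,\ucr(G) \;>\; \tfrac{2}{3}\,\ucr(G) - 10^8.
\]

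The hard part is the structural extraction. The hypotheses supply only $2$-connectivity, while four edge-disjoint $uv$-paths in $G-e$ require local edge-connectivity at least $5$ in $G$, which is globally impossible in (near-)$3$-regular instances. A natural dichotomy is thus needed: either $G$ admits an edge $e$ with $\ucr(G-e) = \ucr(G)$ outright (using the vast reservoir of uncrossed edges of $D$ that lie outside any ``crossing core''), in which case the conclusion is immediate; or else $G$ is essentially $k$-crossing-critical, and one appeals to structural results for large crossing-critical graphs with min-adjacency $3$ in order to supply the required dense local piece. Making this dichotomy quantitative against the huge $m_1(k)$ threshold --- rather than the path-routing step itself --- is where the main difficulty of the proof lies.
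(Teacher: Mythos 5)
Your re-insertion step has a fundamental gap that the rest of the proof cannot recover from. When you route $e$ along a thin tube around the drawn curve of $P_j$, the new edge crosses not only the $c_j$ edges that already cross $P_j$ in $D^*$, but also \emph{all the other edges incident to the internal vertices of $P_j$} (minus the two path edges at each such vertex). Without an a priori bound on the degrees of those internal vertices, this contributes an amount that has nothing to do with $\ucr(G-e)$ and can swamp the additive $10^8$; a single high-degree vertex on $P_j$ ruins the estimate. This is exactly why the paper works not with ``four edge-disjoint paths'' but with $(\ell,\Delta)$-\emph{earrings}: two short $uv$-paths whose internal vertices all have degree below a fixed $\Delta$. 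The $\Delta\ell$ term that your accounting omits is absorbed into the additive constant, and making it bounded is precisely what the degree condition buys.

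This also dissolves the difficulty you flagged as ``the hard part.'' The paper does not need four edge-disjoint paths or local $5$-edge-connectivity: an earring consists of the base edge $e$ together with the two face boundaries incident to $e$ in a planar drawing, so the two paths are not even required to be edge-disjoint (shared edges are $2$-edge-cuts with $e$ and hence never crossed). With only two paths, the Type~2B crossings of the optimal drawing of $G-e$ are split between the two, so one of them sees at most $\tfrac12\ucr(G-e)$ of them, which already yields the $2/3$ coefficient; your ``four paths to get an average of $\tfrac12$'' is doing extra work to compensate for double-counting that the two-path version avoids. The real structural labor — which your sketch waves at but does not execute — is showing that a large irreducible graph has at least one edge whose two incident faces, after the crossings-to-vertices conversion of an optimal drawing, are short and avoid high-degree vertices away from the endpoints. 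The paper proves this with a dedicated Euler-formula counting argument (Lemmas~\ref{lem:workhorse} and~\ref{lem:earnonplanar}), plus a separate elementary Whitney-flip argument for the non-irreducible case. Your appeal to ``Mader-type extraction'' and a heuristic dichotomy is not a substitute for that counting, and in any case extracts the wrong object (high edge-connectivity rather than bounded degree along short faces).
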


\ignore{
We observe that this implies an improvement on the (proportionality
constant of the) Richter and Thomassen result for all simple
graphs. Indeed, it is easy to see that a graph that 
an easy exercise to show that a graph that does not satisfy condition
(ii) of $X$-minimality has an edge whose removal decreases the
crossing number by a factor of at most one half. Thus we obtain the following
corollary even if the $X$-minimality condition is omitted.
\begin{corollary}\label{cor:main4}
For each positive integer $k$, there is an integer $m_1:=m_1(k)$ with
the following property. If $G$ is a $2$-connected simple graph
with $\ucr(G)=k$ and at least $m_1$ edges, then $G$ has an edge $e$
such that $\ucr(G-e) \ge (1/2)\ucr(G) - 10^8$.
\end{corollary}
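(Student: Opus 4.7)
The plan is to deduce Corollary~\ref{cor:main4} from Theorem~\ref{thm:main3} by splitting into two cases according to whether $G$ satisfies the extra hypothesis of that theorem.

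In the first case, every vertex of $G$ is adjacent to at least three distinct vertices. Since $G$ is simple this just says that $G$ has minimum degree at least $3$, so $G$ meets the hypotheses of Theorem~\ref{thm:main3} verbatim (with the same $m_1(k)$), and that theorem hands us an edge $e$ with
$$\ucr(G-e) > \tfrac{2}{3}\ucr(G) - 10^8 \geq \tfrac{1}{2}\ucr(G) - 10^8,$$
which is stronger than the required bound.

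In the second case $G$ contains a vertex $v$ with at most two distinct neighbors; since $G$ is simple and $2$-connected this forces $v$ to have degree exactly $2$, with distinct neighbors $u,w$ and incident edges $e_1 = uv$ and $e_2 = vw$. The plan here is to show that one of $e_1, e_2$ already satisfies the stronger conclusion $\ucr(G - e_i) \geq \tfrac{1}{2}\ucr(G)$, with no slack needed. The starting point is the pendant-vertex identity
$$\ucr(G - e_1) = \ucr(G - v) = \ucr(G - e_2),$$
which reduces the task to proving $\ucr(G) \leq 2\,\ucr(G - v)$. To do so, the plan is to extend an optimal drawing $D_0$ of $G - v$ to a drawing of $G$ by placing $v$ arbitrarily close to $u$ (so that $e_1$ is drawn without crossings) and routing $e_2$ along a Jordan curve that tracks a well-chosen $uw$-path $Q$ in $D_0$. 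The crossings of $e_2$ are then bounded by the total number of crossings of $D_0$ along the edges of $Q$; invoking the $2$-edge-connectivity of $G$ and Menger's theorem produces two edge-disjoint $uw$-paths in $G - v$, and a pigeonhole on crossing multiplicities (each crossing of $D_0$ is incident to at most two edges, hence at most one of the two disjoint paths) selects a $Q$ whose edges carry at most $\ucr(G - v)$ crossings in $D_0$. This gives $\ucr(G) \leq 2\,\ucr(G-v)$, and hence the claim, for the edge $e = e_1$.

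The main obstacle I anticipate is the degenerate sub-case in which $G - v$ admits only a single edge-disjoint $uw$-path, equivalently when $G$ carries a $2$-edge-cut one of whose members is $e_1$ or $e_2$. There the naive routing only yields $\ucr(G) \leq 3\,\ucr(G-v)$, which is insufficient. The plan for this sub-case is to decompose $G$ along the $2$-edge-cut into two smaller $2$-connected pieces, shift attention to the cut-edge $e^*$ of $G - v$ separating $u$ from $w$, and either recurse or apply Theorem~\ref{thm:main3} to whichever side retains its hypotheses; the $-10^8$ slack in the corollary's statement is sized precisely to absorb the bookkeeping losses from any such decomposition.
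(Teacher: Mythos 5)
Your case split and the treatment of the first case (minimum degree at least $3$, apply Theorem~\ref{thm:main3}) follow the route the paper intends. The problem is in your second case, and it is a genuine gap, not a bookkeeping issue. The step ``the crossings of $e_2$ are bounded by the total number of crossings of $D_0$ along the edges of $Q$'' is false: an arc that tracks a path $Q$ in a drawing must also cross, at each internal vertex $x$ of $Q$, the edges incident with $x$ that lie on the side of $Q$ the arc passes; this contributes up to $\sum_x d(x)$ additional crossings, which is unbounded when $Q$ is long or its internal vertices have high degree. Controlling exactly this contribution is the entire purpose of the paper's $(\ell,\Delta)$-earring machinery (bounded path length, bounded internal degrees) in Lemmas~\ref{lem:workhorse}--\ref{lem:work2}; it cannot be waved away for an arbitrary $uw$-path supplied by Menger. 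Worse, the conclusion you are aiming for in this case --- that one of $e_1,e_2$ satisfies $\ucr(G-e_i)\ge\frac12\ucr(G)$ --- is simply false. Take a large simple planar triangulation $T$ and two vertices $u,w$ at large dual distance, and let $G$ be $T$ plus a new vertex $v$ adjacent to $u$ and $w$. Then $G$ is simple and $2$-connected, $v$ has degree $2$, $\ucr(G-e_1)=\ucr(G-e_2)=\ucr(G-v)=\ucr(T)=0$, yet $\ucr(G)=\ucr(T+uw)$ can be made arbitrarily large. So no routing argument at $v$ can succeed; in this case the good edge must be found elsewhere in the graph.

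Consequently the ``main obstacle'' you anticipate (a $2$-edge-cut through $e_1$ or $e_2$) is not the real obstacle, and the proposed remedy (decompose and recurse, absorbing losses into the $-10^8$ term) is not an argument. For what it is worth, the paper itself only asserts the second case as ``an easy exercise'' in a passage the authors ultimately commented out of the manuscript; to make the corollary rigorous you would need either to reduce to a graph with no vertex of exactly two neighbours (e.g.\ by suppressing degree-$2$ vertices and tracking how a good edge of the suppressed graph lifts back to $G$), or to locate the good edge away from the low-degree vertices using the earring machinery directly.
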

}

We conclude this section with a brief overview of the proofs of
Theorems~\ref{thm:main2} and~\ref{thm:main3}, and of the rest of this
paper.

As in~\cite{ckt},~\cite{foxtoth}, and~\cite{rt}, we make essential use
of the
embedding method. This technique consists of finding a set $E_0$
of edges in a graph $G$, and for each $e=uv\in E_0$ a set of pairwise
edge-disjoint $uv$-paths $\pp(e)$, with the
aim of drawing $G-E_0$ (with $\ucr(G-E_0)$ crossings) and then
embedding each $e\in E_0$ very closely to some path in $\pp(e)$. The
idea is to choose the set $E_0$ so that the embedding can be done
without adding too many crossings. 

Richter and Thomassen proved the existence of an edge $e=uv$ (so that
$E_0 = \{e\}$) with the property that there is a $uv$-path (that
avoids $e$) of length at
most $4$, all of whose internal vertices have degree less than
$12$. Fox and T\'oth, and 
\v{C}ern\'{y}-Kyn\v{c}l-T\'oth used the density of $G$ to show the
existence of a large set $E_0$ of edges, such that each edge $e=uv$ of
$E_0$ has a large collection $\pp(e)$ of short edge-disjoint paths,
and such that the collections $\pp(e)$ are pairwise edge-disjoint. 

In
our current setup (sparse graphs) for all we know the graphs under
consideration may have maximum degree $3$, and so in general we cannot
expect to find collections $\pp(e)$ of more than two edge-disjoint
paths, for each $e\in E_0$.  We prove that, indeed, each graph
under consideration has 
large set $E_0$ of edges such that each $e=uv\in E_0$ has two
short $uv$-paths $P(e), Q(e)$ whose internal vertices have bounded
degree, and if $e\neq f$ then $P(e)\cup Q(e)$ and 
$P(f) \cup Q(f)$ are edge-disjoint. As it happens, $P(e)$
and $Q(e)$ are not necessarily edge-disjoint, but this turns out to be
unimportant. To be slightly more precise, let us mention that each graph $\Xi=e\cup P(e) \cup
Q(e)$ has the property that $P(e)$ and $Q(e)$ have length at most
$\ell$, and the degree of their internal vertices is less than
$\Delta$. Following the lively notation in~\cite{ckt}, we call each
$\Xi$ an $(\ell,\Delta)$-{\em earring}.

Most of the rest of this paper is devoted to proving the result
described in the previous paragraph. We start by establishing, in
Section~\ref{sec:assorted}, several assorted statements on planar
graphs; these are, in one way or another, elementary consequences of
Euler's formula.  The existence of a large set of edge-disjoint
$(\ell,\Delta)$-earrings (for certain values of $\ell$ and $\Delta$)
is proved in Section~\ref{sec:earrings} for planar graphs, and in
Section~\ref{sec:earrings2} for irreducible nonplanar graphs. 

In Section~\ref{sec:emb} we establish the version of the embedding
method that we need.  The proofs of Theorems~\ref{thm:main2}
and~\ref{thm:main3} are in
Section~\ref{sec:proofmain}.  

In Section~\ref{sec:bd} we discuss the
connection between the decay of crossing numbers and the concept,
recently introduced by Mohar and Tamon~\cite{mohartamon}, of expected
crossing numbers.
Finally, in Section~\ref{sec:conrem} we
present some concluding remarks and open questions.

\section{Assorted lemmas on planar graphs}\label{sec:assorted}

A {\em \topedge} in a graph is a path whose endpoints have degree at
least $3$, and all whose internal vertices have degree $2$. 

\begin{lemma}\label{lem:alem}
Let $G=(V,E)$ be a planar graph with minimum degree at least $2$, and
let $B\subseteq V$ be a set of vertices of degree at least $3$.
Suppose that
the number of {\topedge}s with both endpoints in $B$ is at most
$\xcosixteen$. Then there are at least $|V|/2 -
\xcosixteen/2 - (3/2)|B|$ edges with both endpoints in $V\setminus B$.
\end{lemma}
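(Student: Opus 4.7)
The plan is to apply a handshake count to the induced subgraph $G[V\setminus B]$, thereby reducing the lemma to an upper bound on the number of cross-edges between $V\setminus B$ and $B$, and then to prove that upper bound by splitting the cross-edges according to whether their outside endpoint lies in $V_2$ (the set of degree-$2$ vertices of $G$) or in $V_3\setminus B$ (the remaining vertices of degree at least $3$).

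Write $V=V_2\sqcup V_3$ and note that $B\subseteq V_3$. Counting edge-endpoints in $V\setminus B$ and using the given degree lower bounds yields
\[
2|E(V\setminus B)| \;=\; \sum_{v\in V\setminus B}\deg_G(v)-e(V\setminus B,B) \;\ge\; 2|V_2|+3(|V_3|-|B|)-e(V\setminus B,B).
\]
Comparing with the target bound, it suffices to prove
\[
e(V\setminus B,B) \;\le\; |V_2|+2|V_3|+s,
\]
which I will attack by splitting $e(V\setminus B,B)=e(V_2,B)+e(V_3\setminus B,B)$ and bounding each piece separately.

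For $e(V_3\setminus B,B)$, the edges in question form a simple bipartite planar subgraph of $G$ on vertex set $B\sqcup(V_3\setminus B)\subseteq V_3$, so the bipartite Euler bound gives $e(V_3\setminus B,B)\le 2|V_3|$. For $e(V_2,B)$, the key observation is that a vertex $v\in V_2$ can contribute $2$ to this count only if both of its two $G$-neighbours lie in $B$; but then, since both neighbours have degree $\ge 3$, they must be the endpoints of the branch containing $v$, forcing $v$ to be the single internal vertex of a length-$2$ branch whose two endpoints lie in $B$. Distinct such $v$ give distinct such branches, of which there are at most $s$ by hypothesis, so at most $s$ vertices of $V_2$ contribute $2$ while the remaining contribute at most $1$; this yields $e(V_2,B)\le 2s+(|V_2|-s)=|V_2|+s$. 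Summing the two pieces produces the required inequality, and plugging back gives $2|E(V\setminus B)|\ge |V|-s-3|B|$ as claimed.

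The argument is quite short, and the only slightly delicate point is the identification of degree-$2$ vertices having both neighbours in $B$ with internal vertices of length-$2$ branches whose endpoints are both in $B$: this is exactly where the hypothesis on $s$ enters, and without this observation the naive bound $e(V_2,B)\le 2|V_2|$ loses a factor of $|V_2|$ that cannot be recovered.
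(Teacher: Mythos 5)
Your argument is correct and takes a genuinely different route from the paper's. The paper counts \emph{vertices}: it shows that the number of white (i.e., $V\setminus B$) vertices all of whose neighbours lie in $B$ is at most $s+2|B|-4$, by observing that each degree-$2$ such vertex is the sole interior vertex of a branch with both endpoints in $B$ (so at most $s$ of them), while the degree-$\ge 3$ ones number at most $2|B|-4$ via the bipartite planar Euler bound applied to the subgraph they span with their black neighbours; every remaining white vertex then has a white neighbour, yielding at least $\bigl(|V\setminus B|-s-2|B|\bigr)/2$ white--white edges. You instead count \emph{edges} directly: the handshake identity in $G[V\setminus B]$ reduces the lemma to the single inequality $e(V\setminus B,B)\le|V_2|+2|V_3|+s$, which you obtain by splitting the cross-edges according to the degree of the endpoint outside $B$, controlling the degree-$2$ side with the branch hypothesis (mirroring the paper's ``Type I'' count) and the degree-$\ge 3$ side with bipartite Euler, here applied across all of $V_3$ rather than only across the black-covered degree-$\ge 3$ vertices. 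Both proofs rest on the same two pillars --- the branch hypothesis and bipartite planar Euler --- but the accounting differs, and your reduction to a single cross-edge inequality is arguably more transparent than the paper's vertex-classification scheme. One caveat, which you inherit from the paper rather than introduce: the bipartite Euler step $|E|\le 2|V|-4$, and the identification of a degree-$2$ vertex with both neighbours in $B$ with a length-$2$ branch, both tacitly assume $G$ has no parallel edges (a degree-$2$ vertex joined by a double edge to a single vertex of $B$ would slip past both counts); the paper's own proof of this lemma carries exactly the same implicit assumption, so you are no worse off, but it is worth being aware of.
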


\begin{proof}
Let $W:=V\setminus B$.  To help comprehension, we color white
(respectively, black) the vertices in $W$ (respectively, $B$). A
{\topedge} is {\em black} if its endpoints are both black. A
white vertex is {\em black-covered} if all its adjacent vertices are
black. A black-covered vertex is {\em of Type I} if it has degree
$2$; otherwise (that is, if it has degree $\ge 3$) it is {\em of
  Type II}. 

Since there are no black vertices of degree $2$, then 
no black {\topedge} can contain more than one Type
I vertex. Thus there
are at most $\xcosixteen$ Type I vertices.

Let $W'$ denote the set of black-covered vertices of Type II, and
let $G'$ denote the subgraph of $G$ induced by the edges incident with
a vertex in $W'$. This is a bipartite graph with bipartition $(W'
,B')$, for
some $B'\subseteq B$. A standard  Euler formula argument yields that
$|E(G')| \le 2|V(G')| - 4 = 2|W'| + 2|B'| - 4$. Since each vertex in $W'$ has degree at least
$3$ (in $G'$, as well as in $G$) it follows that $|E(G')| = \sum_{v\in
  W'} d(v) \ge 3 |W'|$. Thus $3|W'|  \le 2|W'| + 2|B'| - 4 \le 
2|W'| + 2|B| - 4$, and so $|W'| \le 2|B| - 4$. Thus, there are at most
$2|B| - 4$ Type II vertices.

Therefore, the total number of black-covered vertices is at most 
$\xcosixteen + 2|B| - 4$. It follows that there are at least 
$|W| - \xcosixteen - 2|B| + 4 > |W| - \xcosixteen - 2|B|$ white vertices that are adjacent to at least one
white vertex, and so there are at least 
$|W|/2 - \xcosixteen/2 - |B|= |V|/2 - \xcosixteen/2 - (3/2)|B|$ 
edges with both endpoints in $W$.
\end{proof}

The {\em length} of a face in a plane graph is the length
of its boundary walk. 

A {\em digon} in an embedded graph consists of two parallel edges,
together with their common endpoints. If the endpoints are $u$ and
$v$, then it is a $uv$-{\em digon}.
A plane embedding of a graph
$G$ is {\em \digonal} if for each pair of vertices $u,v$ joined by parallel
edges, there exist edges $e,e'$ with endpoints $u$ and $v$, such that the disc bounded by the
digon formed by $e$ and $e'$ contains all edges parallel to $e$ and
$e'$, and no other edges. 

\begin{lemma}\label{cor:faces}
Let $G$ be a connected plane graph in which each vertex is adjacent to
at least $3$ vertices. Suppose that the embedding of $G$ is {\digonal}.
Let $r \ge 0$ be an integer. 
Let $F$ be the set of faces of $G$, and let $F'$ be the set of those
faces whose length is at most $r+5$.
Then $|F'|\geq \frac{r|F|+12}{r+3}$.
\end{lemma}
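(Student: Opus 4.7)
The plan is to reduce to the simple-graph case via the cleanness hypothesis, and then invoke the standard Euler-formula inequality $\sum_F (6-\ell(F)) \geq 12$ that holds for simple plane graphs of minimum degree $\geq 3$.

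First I would build an auxiliary simple plane graph $G^*$ from $G$ by replacing, for each pair $\{u,v\}$ joined by $k \geq 2$ parallel edges, the entire $uv$-bundle by a single $uv$-edge. Because the embedding is clean, there is a well-defined closed disc $D_{uv}$ (bounded by the two outermost parallel edges between $u$ and $v$) which contains every edge parallel to them and no other edges of $G$; inside this disc the $k$ parallel edges carve out precisely $k-1$ digon faces, all of which simply disappear under the reduction. The hypothesis that each vertex is adjacent to at least $3$ distinct vertices ensures that $G^*$ has minimum degree at least $3$, and connectedness is preserved. In particular, every face of $G^*$ has length at least $3$.

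Next I would apply Euler's formula to $G^*$. From $|V(G^*)| - |E(G^*)| + |F(G^*)| = 2$ together with $2|E(G^*)| = \sum_{v} d_{G^*}(v) \geq 3|V(G^*)|$, one derives $|E(G^*)| \leq 3|F(G^*)| - 6$, which combined with $\sum_{F \in F(G^*)} \ell(F) = 2|E(G^*)|$ gives
\begin{equation*}
\sum_{F \in F(G^*)} \bigl(6 - \ell(F)\bigr) \;\geq\; 12.
\end{equation*}
Partitioning $F(G^*)$ into the short faces (length $\leq r+5$) and the long faces (length $\geq r+6$), and using that every face of $G^*$ has length at least $3$, each short face contributes at most $3$ and each long face contributes at most $-r$ to the left-hand side; this yields $3|F'(G^*)| - r|F''(G^*)| \geq 12$.

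Finally I would transfer this inequality back to $G$. Every non-digon face of $G$ corresponds bijectively, and with the same length, to a face of $G^*$, while each digon eliminated by the reduction has length $2 \leq r+5$ and hence belongs to $F'(G)$. Letting $d$ denote the total number of such digons, we have $|F'(G)| = |F'(G^*)| + d$, $|F''(G)| = |F''(G^*)|$, and $|F(G)| = |F(G^*)| + d$; substituting into the previous inequality yields $3|F'(G)| - r|F''(G)| \geq 12 + 3d \geq 12$, and rearranging with $|F''(G)| = |F(G)| - |F'(G)|$ gives the claimed bound $|F'| \geq \tfrac{r|F|+12}{r+3}$. The most delicate point I anticipate is the bookkeeping in the reduction step: verifying precisely that the faces of $G$ decompose into digons contained in bundle discs plus faces in length-preserving bijection with those of $G^*$, and that cleanness rules out the pathological configurations in which a bundle's disc might intersect unrelated parts of the embedding. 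The rest is routine Euler arithmetic.
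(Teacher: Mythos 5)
Your proof is correct and follows essentially the same route as the paper's: reduce to a simple plane graph by collapsing each clean parallel-edge bundle to a single edge, invoke an Euler-formula argument to get $\sum_f (6 - \ell(f)) \geq 12$ for the reduced graph, split faces into short and long, and then transfer the resulting inequality back to $G$ by accounting for the $d$ digon faces that were eliminated. The paper derives $\sum_f(6-\ell(f)) \geq 12$ via the face-weight $w(f) = \sum_{v\sim f} 1/d(v)$, whereas you derive it directly from $|E| \leq 3|F| - 6$; this is a cosmetic difference, and the bookkeeping in your transfer step matches the paper's.
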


\begin{proof}
Let $\oG$ be a graph obtained from $G$ as follows: for each pair 
$(u,v)$ of vertices joined by parallel edges, contract to a single
all the parallel edges between $u$ and $v$.
Let $\oF$ denote the set of faces of
$\oG$, and let $\oA$ denote the set of faces of $\oG$ with length at
most $r+5$. Our first task is to show that
$|\oA| \geq \frac{r|\oF|+12}{r+3}$.

For each $f\in \oF$ the sum $w(f):=\sum_{v\sim f}1/d(v)$
  is the {\it weight} of $f$, where $d(v)$ denotes the degree of the
  vertex $v$ and $v\sim f$ means that $v$ is incident with $f$. (A
  vertex $v$ contributes to $w(f)$ as many times as the boundary walk of
  $f$ passes through $v$.) Since $H$ is simple and has minimum degree
  at least $3$,
  then, letting $l(f)$ denote the length of $f$,  we have $l(f)\geq 3$ and
  $w(f)\leq l(f)/3$.
It is easy to see that $|V(H)| = \sum_{f\in \oF}w(f)$ and
$2|E(H)|=\sum_{f\in \oF}l(f)$. 
From the last two equations and Euler's formula it follows that 
$2=\frac{1}{2} \sum_{f\in \oF}\{2w(f)-l(f)+2\}$. 

Since $w(f)\leq l(f)/3$, we have

$$12\leq \sum_{f\in \oF}\{-l(f)+6\}=\sum_{f\in \oA}\{-l(f)+6\}+\sum_{f\in
  \oF-\oA}\{-l(f)+6\}.$$ 

Since $l(f) \ge 3$ for each $f \in \oF$, then $-l(f)+6\leq 3$ and thus
$\sum_{f\in \oA}\{-l(f)+6\}\leq 3|\oA|$. If $f \in \oF-\oA$ then
$l(f)-6\geq r$, that is, $-l(f)+6\leq -r$, and so $\sum_{f\in
  \oF-\oA}\{-l(f)+6\}\leq -r(|\oF|-|\oA|)$. Thus, $12\leq
3|\oA|-r(|\oF|-|\oA|)$, and so 
$|\oA|\geq \frac{r|\oF|+12}{r+3}$, as required. 

Now as we inflate back $\oG$ to $G$, each face in $\oA$ becomes a face
in $F'$. The other faces in $F'$ are precisely the $t:=|E(G)\setminus
E(\oG)|$ faces created in the inflation process, that is, those
bounded by parallel edges.
Thus
$|F| = |\oF| + t$ and
$|F'| = |\oA| + t$. Thus 
$|F'|-t \geq \frac{r(|F|-t)+12}{r+3}$, and so
$|F'| \geq \frac{r|F|+12}{r+3} + t(1-\frac{r}{r+3}) \ge \frac{r|F|+12}{r+3}$.
\end{proof}

If $D$ is a digon in a plane graph, then 
the open (respectively, closed) disc bounded by $D$
will be denoted $\Delta(D)$ (respectively, $\oDelta(D)$). 
If $D, D'$ are digons, then we write $D'\preceq D$ 
if $\Delta(D') \subseteq \Delta(D)$. We recall that a vertex of degree
$0$ is an {\em isolated vertex}.

\begin{proposition}\label{pro:thecore}
Let $G=(V,E)$ be a plane graph, and let $Z$ be a set of isolated
vertices of $G$.
Suppose that for each digon $D$ in $G$, the 
  disc bounded by $D$ contains at least one vertex in $Z$. Then $G$
  has at most $3|V\setminus Z| + |Z|$ edges.
\end{proposition}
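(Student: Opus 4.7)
The plan is to combine Euler's formula with a count of length-$2$ faces, using the digon hypothesis to show that distinct bounded length-$2$ faces ``claim'' distinct vertices of $Z$.

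\emph{Setup.} Write $n := |V\setminus Z|$, $z := |Z|$, $m := |E|$, and let $f$ be the number of faces in the plane embedding of $G$. Since each vertex of $Z$ is isolated, it forms its own connected component, so if $c'$ denotes the number of components of $G-Z$, then $G$ has $c' + z$ components in total, and Euler's formula simplifies to $f = 1 + c' - n + m$. Let $f_2$ denote the number of faces of length exactly $2$. From $2m = \sum_F \ell(F) \ge 2f_2 + 3(f - f_2)$ we get $f \le (2m + f_2)/3$, and substituting into the expression for $f$ yields
\[
m \le 3n + f_2 - 3c' - 3.
\]

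\emph{Bounding $f_2$.} The key claim is that $f_2 \le z + 1$ whenever $m \ge 2$. A length-$2$ face has boundary walk of length $2$, which must be either (i) a digon consisting of two distinct parallel edges, or (ii) a single bridge traversed twice; option (ii) forces $G$ to consist of a single edge plus isolated vertices, giving $m=1$, and I handle that case separately. For (i), let $F$ be a bounded length-$2$ face bounded by a digon $D_F$, so $F \subseteq \Delta(D_F)$. The open disc $\Delta(D_F)$ cannot contain any edge of $G$ nor any non-isolated vertex: any such object would force additional edges to appear on $F$'s boundary walk, contradicting length $2$. Hence $\Delta(D_F)$ contains only isolated vertices, and by hypothesis at least one of them, call it $z_F$, lies in $Z$. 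If $F'$ is another bounded length-$2$ face with digon $D_{F'}$, then $F$ and $F'$ have disjoint interiors; since each face differs from its enclosing open disc by only a finite set of isolated vertices, and a nonempty open subset of the plane cannot be finite, the open discs $\Delta(D_F)$ and $\Delta(D_{F'})$ are themselves disjoint. Thus $z_F \ne z_{F'}$, so $F \mapsto z_F$ is injective on bounded length-$2$ faces, giving at most $z$ such faces. Adding the unique unbounded face yields $f_2 \le z + 1$.

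\emph{Conclusion.} Substituting gives $m \le 3n + z - 3c' - 2$. If $m \ge 2$ then $c' \ge 1$, so $m \le 3n + z - 5 \le 3n + z$; the remaining cases $m \le 1$ are trivial.

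The technical obstacle is the injectivity step: one must carefully verify that a bounded length-$2$ face's enclosing disc has no internal graph structure beyond isolated vertices, and then use the disjointness of face interiors to conclude that the corresponding enclosing discs are themselves disjoint as open subsets of the plane.
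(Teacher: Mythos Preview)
Your proof is correct and takes a genuinely different route from the paper's. The paper proves (the slightly stronger bound $3|V\setminus Z| + |Z| - 6$) by induction on the number of digons, with a case analysis on whether a $\preceq$-minimal digon is also $\preceq$-maximal, whether it shares an edge with the next-larger digon, and so on. You instead argue directly from Euler's formula and the face-length identity $2m = \sum_F \ell(F)$, reducing the problem to bounding the number of length-$2$ faces; the key observation that each bounded length-$2$ face occupies its entire enclosing open disc up to finitely many isolated points, and hence claims a private vertex of $Z$, is clean and does all the work in one stroke. Your approach is more elementary and avoids the inductive case analysis entirely; the paper's approach yields a constant that is better by one ($-6$ versus your $-5$ when $m\ge 2$), but this is irrelevant for the proposition as stated and for its applications later in the paper.
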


\begin{proof}
Let $Y:=V\setminus Z$. 
To help comprehension, we colour the vertices in $Y$ and $Z$ black and
green, respectively.

We prove the stronger statement that $G$ has at most 
$3|Y| + |Z| - 6$ edges. 
We proceed by induction on the number of digons
in $G$. In the base case $G$ has no digons, and so by Euler's Formula
it has at most $3|Y| - 6$ edges, as required.
For the inductive step, we assume that $G$ has at least one digon, and let
$D$ be a $\preceq$-minimal digon in $G$.

Suppose first that $D$
is also $\preceq$-maximal. Then let $G'$ be the graph obtained from $G$ by removing
one edge of $D$ and one green vertex contained in $\Delta(D)$. Now
$G'$ contains one fewer edge and one fewer green vertex than $G$. It is
easy to see that the induction hypothesis can be applied to $G'$, and
so the inductive step follows. 

Therefore we may assume that $D$ is not $\preceq$-maximal. Among all
digons that contain $D$, let $D'$ be a $\preceq$-minimal one.

Suppose that $D$ and $D'$ have an edge $e$ in common, and let $\overline{e}$ be
the other edge of $D$. It
is easy to see that the induction hypothesis can be applied to the
graph obtained from $G$ by removing $\overline{e}$ and a green vertex
contained in $\Delta(D)$, and once again the inductive step follows. 
Thus we may assume that $D$ and $D'$ do not have an edge in common. 

If $\Delta(D')$ contains a green vertex not contained in
$\Delta(D)$, the situation is again straightforward: the induction
hypothesis can be applied to the graph $G'$ obtained by removing one
edge of $D$ and one green vertex contained in $\Delta(D)$, and the
inductive step follows.  Thus we may assume that every green vertex
contained in $\Delta(D')$ is contained in $\Delta(D)$. 

In this case, there are no digons other than $D'$ and $D$ contained in
$\oDelta(D')$.  Now let $G'$ be the graph obtained by removing from
$G$ the black vertices and all the edges contained in
$\Delta(D')$. Let $Y'$ and $Z'$ denote the sets of black and green
vertices of $G'$, respectively, and let $E'$ denote the set of edges
of $G'$ (note that $Z'=Z$).  We may clearly apply the induction
hypothesis to $G'$, obtaining that $|E'| \le 3|Y'| + |Z| - 6$.  Let
$Y'':= Y\setminus Y'$, and $E'':= E\setminus E'$.  Let $x,y$ be the
vertices of $D'$.  Consider the graph $G''$ that consists of the
vertices in $Y''\cup \{x,y\}$ and the edges in $E''$. Since $G''$ has
exactly one digon (namely $D$), the usual Euler formula
argument yields $|E(G'')| \le 3|V(G'')| - 5$. However, this inequality
is tight only if $G''$ is maximally planar, that is, if no edge can be
added between two nonadjacent vertices while maintaining
planarity; thus, since $x$ and $y$ are not adjacent in $G''$, it follows
that $|E(G'')| \le 3|V(G'')| - 6$. Thus $|E''| \le 3(|Y''| + 2) -
6$. That is, $|E| - |E'| \le 3(|Y| - |Y'|+2) - 6$, and so $|E| \le
3|Y| + |Z|-6$, as required.
\end{proof}

A set $Z$ of vertices in a $2$-connected planar graph $G$ is an {\em \anchor} if the
following hold:

\begin{enumerate}
\item no vertex in $Z$ is part of a $2$-vertex-cut in $G$; and
\item if $\{u,v\}$ is a $2$-vertex-cut in $G$, then every nontrivial
  $uv$-bridge contains a vertex in $Z$.
\end{enumerate}

\begin{lemma}\label{lem:aux1}
Let $G$ be a $2$-connected plane graph in which each vertex is
adjacent to at least $3$ distinct vertices, and 
let $Z$ be an {\anchor} of $G$. Let $Y\subseteq V(G)\setminus Z$, and let $E_Y$ denote the set of edges of $G$ with both endpoints
in $Y$.  
Then the number
of faces of $G$ that are incident with exactly $2$ vertices of $Y$ is
at most $3|Y| + |Z| + |E_Y|$.
\end{lemma}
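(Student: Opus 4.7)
The plan is to construct an auxiliary plane graph $H$ on an enlarged vertex set and apply Proposition~\ref{pro:thecore}. For each $e\in E_Y$ introduce a new vertex $w_e$ placed at an interior point of the curve representing $e$ in the given embedding of $G$, and set $W:=\{w_e:e\in E_Y\}$, $V(H):=Y\cup Z\cup W$, and $Z^*:=Z\cup W$. Write $F_2$ for the set of faces of $G$ incident with exactly two vertices of $Y$. For each $f\in F_2$ let $y_1(f),y_2(f)$ be those two $Y$-vertices, and insert into $H$ a chord $c_f$ drawn in the open face $f$ joining $y_1(f)$ to $y_2(f)$. Since distinct chords lie in distinct open faces of $G$, they are pairwise non-crossing, so $H$ is a plane (multi)graph with $|E(H)|=|F_2|$, $|V(H)\setminus Z^*|=|Y|$, $|Z^*|=|Z|+|E_Y|$, and every vertex of $Z^*$ isolated in $H$.

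The crux is the claim that for every digon $D$ of $H$, the disc $\Delta(D)$ bounded by $D$ contains a vertex of $Z^*$; in fact I will verify this for either of the two discs bounded by $D$. A digon $D$ consists of two chords $c_f,c_{f'}$ with $f\ne f'$ sharing the same $Y$-endpoints $y_1,y_2$. Let $R$ be one of the discs bounded by $D$ and suppose, for contradiction, that $R$ contains no vertex of $Z^*$. Because $c_f$ and $c_{f'}$ lie in the open faces $f$ and $f'$ of $G$, the boundary of $R$ meets $V(G)$ only at $y_1$ and $y_2$, so any $G$-structure inside $\overline{R}$ is attached to the rest of $G$ solely through $\{y_1,y_2\}$. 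If the open disc $R$ contained a vertex $v\in V(G)$, then $\{y_1,y_2\}$ would be a $2$-vertex-cut of $G$ and the component of $G-\{y_1,y_2\}$ containing $v$ would form a nontrivial $y_1y_2$-bridge; the anchor property (item (2)) would then force a vertex of $Z$ in that bridge, hence in $R$, contradicting the assumption. So $R$ has no $G$-vertex in its interior, and consequently any $G$-edge lying in $R$ must be a parallel $y_1y_2$-edge---necessarily a member of $E_Y$---whose marker $w_e$ would then lie in $R$, another contradiction. Thus $R$'s interior is entirely free of $G$-structure, which forces $c_f$ and $c_{f'}$ to lie in a single face of $G$, giving $f=f'$, the desired contradiction.

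With the digon hypothesis verified, Proposition~\ref{pro:thecore} applied to $H$ with green set $Z^*$ yields
\[
|F_2|=|E(H)|\le 3|V(H)\setminus Z^*|+|Z^*|=3|Y|+|Z|+|E_Y|,
\]
as required. The main obstacle is the digon claim, in particular the careful use of the anchor condition to rule out an interior $G$-vertex of $R$ without an accompanying $Z$-vertex; once that is dispatched, the marker vertices $w_e$ absorb exactly the remaining possibility---a parallel $y_1y_2$-edge trapped between $f$ and $f'$---and the topological contradiction $f=f'$ closes the loop.
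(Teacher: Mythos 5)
Your construction is essentially the paper's: where the paper colours each $E_Y$-edge red, builds a plane graph $H$ out of $Y$-vertices, red edges, green ($Z$) vertices and one blue arc per $F_2$-face, and then passes to $K$ by replacing each red edge with an isolated red vertex, you instead place the marker $w_e$ on $e$ up front. That is a cosmetic difference; both arguments ultimately appeal to Proposition~\ref{pro:thecore} with $|F_2|$ edges and $|Y|$ non-green/red vertices.

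There is, however, a real problem in your verification of the digon hypothesis. You claim the property ``for either of the two discs bounded by $D$''. This is false: take $G=K_4$, $Z=\emptyset$ (legal, since $K_4$ is $3$-connected), and $Y=\{y_1,y_2\}$ a pair of vertices. Then $F_2$ consists of the two triangular faces bounded by the edge $y_1y_2$, the two chords form one digon $D$, one side of $D$ contains the edge $y_1y_2$ together with its marker $w_{y_1y_2}$, and the \emph{other} side contains the remaining two vertices of $K_4$ and not a single vertex of $Z^*$. More concretely, the step ``if the open disc $R$ contained a vertex $v\in V(G)$, then $\{y_1,y_2\}$ would be a $2$-vertex-cut of $G$'' is unjustified: it holds only when $G$ also has a vertex strictly outside $\overline R$. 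If $R$ happens to swallow all of $G-\{y_1,y_2\}$, then $\{y_1,y_2\}$ need not be a cut, the anchor property has nothing to say, and your chain of cases does not produce a $Z^*$-vertex inside $R$. In such a situation the marker sits on the \emph{opposite} side of $D$, so in order to invoke Proposition~\ref{pro:thecore} you must know that the disc $\Delta(D)$ used in that proposition is the side with the marker, which depends on how the chords (in particular the chord lying in the outer face of $G$) are drawn. The argument needs an extra sentence pinning down this choice so that $\Delta(D)$ is guaranteed to be the ``small'' side; the paper's own write-up (``by hypothesis, $\Delta^o(D)$ contains a green vertex'') has the same implicit reliance, but your explicit over-claim ``either of the two discs'' is outright incorrect and should be removed, and the $2$-cut assertion should be conditioned on the existence of $G$-vertices on both sides of $D$.
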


\begin{proof}
We may assume that $|Y| \ge 2$, as otherwise there is nothing to
prove.  Let $F_2$ denote the set of faces of $G$ that are incident
with exactly two vertices of $Y$.

We start by coloring {red} each edge in $E_Y$, and {green} each vertex
in $Z$.  Now for each $f\in F_2$, join the two vertices in $Y$
incident with $f$ by a simple {\em blue} arc contained (except, obviously, for
its endpoints) in $f$. Let $H$
denote the plane graph that consists of the vertices in $Y$ plus all
the red edges and the blue arcs (now seen as edges), as well as the
set $Z$ of green vertices. Note that the green vertices are isolated
in $H$. We remark that $|F_2|$ is the number of blue edges in $H$.

Note that if $D$ is a blue digon in $H$ (that is, both edges of $D$ are blue), with vertices
$u$ and $v$, then $\overline{\Delta}(D)$ contains a $uv$-bridge in $G$.
This bridge may be trivial (in which case it is a red edge)
or nontrivial (in which case, by hypothesis, $\Delta^o(D)$ contains a
green vertex).

Finally, let $K$ denote the graph that results from $H$ by substituting each red
edge by an isolated red vertex (placed in the interior of the red
edge). Note that $|E(K)| = |F_2|$, that the vertex set of $K$ is the union of $Y$ with the
set of all green or red vertices, and that there are $|Z|$ green and
$|E_Y|$ red vertices. 

The graph $K$ has the property that for each (necessarily blue) digon
$D$ in $K$, $\Delta^o(D)$ contains either a green or a red vertex. 
Applying Proposition~\ref{pro:thecore} we obtain that $|E(K)| \le
3|Y| + |Z| + |E_Y|$. Thus  
$|F_2| \le 3|Y| + |Z| + |E_Y|$, as required.
\end{proof}

If $G$ is a plane graph, then we let $G^o$ denote its dual.

\begin{lemma}\label{lem:aux4}
Let $G$ be a $2$-connected plane graph, 
and let $Z$ be an {\anchor}
of $G$. Suppose that the embedding of $G$ is {\digonal}. Let $\Fprime$
be a set of faces of $G$ of length at least $3$. 
Then the number of {\topedge}s in $G^o$ with both endpoints in $\Fprime$
is at most $3|\Fprime| + |Z|$. 
\end{lemma}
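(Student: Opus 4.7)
The plan is to build an auxiliary plane multigraph $K$ on vertex set $F'\cup Z$ whose edges correspond bijectively to the branches of $G^o$ with both endpoints in $F'$, then verify that each digon of $K$ bounds a disc containing a vertex of $Z$; Proposition~\ref{pro:thecore} applied to $K$ will then yield $|E(K)|\le 3|V(K)\setminus Z|+|Z|=3|F'|+|Z|$. To construct $K$, put a ``face-vertex'' inside each face in $F'$, keep the vertices of $Z$ at their $G$-positions (isolated in $K$), and for each branch $\xi$ of $G^o$ with endpoints $f,f'\in F'$ draw an arc from the $f$-vertex to the $f'$-vertex routed through $f$, across the $p\ge 1$ edges of $G$ belonging to $\xi$ (which by cleanliness either form a single edge or a maximal $uv$-clump sitting inside a lens), and into $f'$. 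Distinct branches occupy pairwise disjoint ``territories'' in $G$, and the arcs can be drawn fan-like inside $f,f'$ and zigzagging across each clump, so the arcs are pairwise non-crossing and $|E(K)|$ equals the number of branches we wish to bound.

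Now fix a digon $D$ of $K$ whose arcs come from two branches $\xi_1,\xi_2$ joining $f,f'\in F'$, where $\xi_i$ corresponds to a maximal $u_iv_i$-clump of $p_i\ge 1$ parallel edges. Cleanliness rules out $\{u_1,v_1\}=\{u_2,v_2\}$ (which would collapse $\xi_1\cup\xi_2$ into one clump and force one of $f,f'$ to be digonal). So either the endpoint pairs share exactly one vertex $u=u_1=u_2$ with $v_1\ne v_2$, or they are disjoint. In the shared case, since $\partial f$ has a unique corner at $u$ forced to consist of the $f$-side outer edges of both clumps (and similarly for $\partial f'$), a rotation count shows $u$ has no other edges, so $u$ has just the two distinct neighbors $v_1,v_2$; the arcs of $D$ wrap around $u$, placing $u\in\Delta(D)$, and the nontrivial $\{v_1,v_2\}$-bridge containing $u$ yields $u\in Z$ by the anchor property. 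In the disjoint case, after relabelling, $v_1,v_2$ lie in $\Delta(D)$; since the arcs of $D$ cross only the clump edges of $\xi_1,\xi_2$, any $G$-edge leaving $\Delta(D)$ must be a clump edge, with outside-endpoint $u_1$ or $u_2$, so $\{u_1,u_2\}$ is a $2$-vertex-cut whose nontrivial bridge contained in $\overline{\Delta(D)}$ must (by the anchor property) meet $Z$ inside $\Delta(D)$.

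A handful of degenerate small configurations arise in which the candidate $2$-cut actually fails to disconnect $G$ (because no vertex of $G$ lies outside the purported bridge); each such configuration is easily shown to produce overlapping $2$-cuts forcing some vertex of $Z$ to lie inside a cut-set, contradicting condition (1) of the anchor, and so these configurations cannot occur under the standing hypotheses. The main obstacle will be the rotation/edge-count analysis at the shared vertex in the first case, together with carefully checking in the disjoint case that no stray $G$-edge escapes $\Delta(D)$ outside the two clumps, so that the relevant bridge in $\overline{\Delta(D)}$ is genuinely separated from the rest of $G$ by $\{u_1,u_2\}$.
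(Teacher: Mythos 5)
Your proposal is essentially the paper's proof: both build an auxiliary plane graph on vertex set $\Fprime\cup Z$ (with $Z$ isolated) whose edges are the branches of $G^o$ between $\Fprime$-faces, and then invoke Proposition~\ref{pro:thecore} after checking that every digon of the auxiliary graph has a $Z$-vertex in its disc. The paper, however, first uses cleanliness to reduce to the case where $G$ has no parallel edges at all (contracting each clump to a single edge does not change the branches between $\Fprime$-faces, since the internal vertices of those branches are precisely digonal faces, which lie outside $\Fprime$); after this reduction every branch is a single dual edge, a digon in the auxiliary graph becomes literally a $2$-edge-cut of $G$, and the anchor condition applies in one step. Your proposal skips this reduction and instead routes one arc per branch across the whole clump, which forces the explicit case analysis (shared endpoint / disjoint endpoints, plus the degenerate small configurations you flag). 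That analysis is sound -- in particular your rotation argument in the shared case and your appeal to anchor condition (1) to rule out the degenerate configurations are exactly the observations needed -- but the reduction to simple $G$ would let you collapse all of it into a single short step; you may wish to adopt it.
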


\begin{proof}
Since the embedding is {\digonal}, 
we may as well assume (in the context of this lemma) that $G$ has no parallel edges. It follows
that all {\topedge}s with both endpoints in $\Fprime$ are
actual edges in $G^o$.  Thus our goal is to show that there are at most
$3|\Fprime| + |Z|$ edges in $G^o$ with both endpoints in $\Fprime$.

Regarding $G$ and $G^o$ as simultaneously embedded, remove everything
except for $\Fprime$ (seen as a set of vertices in $G^o$), the edges
(in $G^o$) joining two vertices in $\Fprime$, and the vertices in $Z$.
The result is a graph $G'$ in which each vertex in $Z$ is isolated,
and such that the disc bounded by every digon contains a vertex in
$Z$. To see this last property, note that if $e$ and $f$ are the edges of a digon in
$G'$, then the edges in $G$ corresponding to $e$ and $f$ are a
$2$-edge-cut in $G$; since $Z$ is an {\anchor} set of $G$, it then
follows that the disc bounded by the digon must contain
a vertex of $Z$ in its interior.

Applying Proposition~\ref{pro:thecore}, we obtain
that $G'$ has at most $3|\Fprime| + |Z|$ edges. This finishes the
proof, since there is a bijection between the edges in $G'$ and the
edges in $G^o$ with both endpoints in $\Fprime$. 
\end{proof}

\section{Earrings in planar graphs}\label{sec:earrings}

\v{C}ern\'{y}, Kyn\v{c}l and
T\'oth introduced the lively terminology of {\em earring of size $p$} to describe a graph
consisting of an edge $e=uv$ plus a collection of $p$ pairwise
edge-disjoint, bounded-length $uv$-paths. In order to use the re-embedding
method, the goal is to find many pairwise edge-disjoint earrings.

As we mentioned in Section~\ref{sec:intro}, in our current context of
sparse graphs, where (for all we know) the graphs under consideration
may have maximum degree $3$, the best we could hope for is to prove
the existence of a large collection of earrings, each of size $2$. As
we also mentioned, in this discussion we do not need the two $uv$-paths of each earring to
be edge-disjoint, but only a weaker condition (see (iii) in the following
definition).

Let $\ell, \Delta$ be positive integers. An
$(\ell,\Delta)$-{\em earring} of a graph $G$ is a subgraph of $G$ that
consists of a {\em base} edge $e=uv$ plus two distinct $uv$-paths $P,Q$ (disjoint from $e$)
with the following properties: (i) each of $P$ and $Q$ has at most
$\ell$ edges; (ii) each internal vertex of $P$ or $Q$ has degree less
than $\Delta$; and (iii) if $f$ is an edge in both $P$ and $Q$,
then $\{e,f\}$ is a $2$-edge-cut of $G$.

An edge $e=uv$ in a $2$-connected plane graph is an $(\ell,\Delta)$-{\em edge} if each
of its two incident faces has length at most $\ell+1$, and no vertex 
incident with these two faces, other than possibly $u$ or $v$, has degree
$\Delta$ or greater. If $e$ is an $(\ell,\Delta)$-edge, then the
subgraph that consists of $e$ plus the cycles that
bound its two incident faces, is an $(\ell,\Delta)$-earring, the
$(\ell,\Delta)$-earring $\earr(e)$ {\em associated to $e$}.

The following lemma is the main workhorse in this paper.

\begin{lemma}\label{lem:workhorse}
Let $G=(V,E)$ be a $2$-connected planar graph in which each vertex is
adjacent to at least $3$ other vertices. Let $Z$ be an 
{\anchor} of $G$, where each vertex in $Z$ has degree $4$. 
Then $G$ has at least 
$\xcoonenum|E|- \xcotwonum |Z|$
pairwise edge-disjoint $(5000,500)$-earrings. 
\end{lemma}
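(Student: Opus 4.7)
The plan is to find many $(5000,500)$-edges in $G$ (i.e., edges whose two incident faces both have length at most $5001$ and contain no vertex of degree $\geq 500$ other than the edge's endpoints) and then extract a large pairwise edge-disjoint family of their associated earrings by a greedy argument. The proof combines Lemma~\ref{cor:faces} (most faces are short), Lemma~\ref{lem:alem} applied in the dual graph $G^o$ (many edges between short faces), Lemma~\ref{lem:aux4} (bound on branches of $G^o$ between long faces), and careful degree counting for heavy vertices.

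First, I would reduce to the case of a \digonal\ embedding, using the $2$-connectivity of $G$ and the anchor properties of $Z$ to re-embed parallel-edge configurations into packed digons without altering the combinatorial structure of the earrings we seek. This reduction makes Lemmas~\ref{cor:faces} and~\ref{lem:aux4} available. Then I apply Lemma~\ref{cor:faces} with $r := 4996$ to obtain a set $F_s$ of faces of length at most $5001$ satisfying $|F_s| \geq (4996/4999)|F(G)|$. Since every vertex is adjacent to at least $3$ distinct vertices, $|V(G)| \leq 2|E(G)|/3$, so Euler's formula gives $|F(G)| \geq |E(G)|/3 + 2$, and hence $|F_s|$ is linear in $|E(G)|$.

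Next, I would apply Lemma~\ref{lem:alem} in the dual graph $G^o$ to count edges of $G$ whose two incident faces both lie in $F_s$. Setting $B := F(G) \setminus F_s$ (the set of long faces, each of dual-degree at least $5002 \geq 3$), Lemma~\ref{lem:aux4} bounds the number of branches of $G^o$ with both endpoints in $B$ by $3|B| + |Z|$. Substituting into Lemma~\ref{lem:alem} yields at least $|F(G)|/2 - 3|B| - |Z|/2$ edges of $G$ with both incident faces in $F_s$; since $|B| \leq (3/4999)|F(G)|$, this is at least a positive constant multiple of $|E(G)|$, minus $|Z|/2$.

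Finally, I would restrict to those doubly-short-faced edges $e$ whose incident faces also contain no heavy (degree $\geq 500$) non-endpoint vertex, yielding exactly the $(5000,500)$-edges. From $\sum_v d(v) = 2|E(G)|$ we get $|V_\Delta| \leq |E(G)|/250$, and each heavy $w$ of degree $d(w)$ spoils at most $\sum_{f \ni w,\, f \text{ short}}(l(f) - 2) \leq 4999\, d(w)$ doubly-short-faced edges. A double counting over $V_\Delta$, combined with Lemma~\ref{lem:aux1} applied to $Y := V(G) \setminus (V_\Delta \cup Z)$ to absorb the anchor contribution from $2$-cuts, gives a count of at least $c|E(G)| - c'|Z|$ $(5000,500)$-edges for small positive constants $c, c'$. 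The associated earring $\earr(e)$ consists of $e$ together with the boundary cycles of its two (short) incident faces, so two earrings share an edge iff they share an incident face; since each short face has at most $5001$ boundary edges, each earring blocks at most $2 \cdot 5000$ others, and greedy selection recovers at least a $(1/10001)$-fraction of the $(5000,500)$-edges as pairwise edge-disjoint earrings, producing the target bound $\xcoonenum|E(G)| - \xcotwonum|Z|$. The hardest step is the heavy-vertex control: a single heavy vertex of large degree may lie on many short faces simultaneously, so the accounting is delicate, and this is where the anchor structure of $Z$ (whose members have degree $4$ by hypothesis) plays a crucial role in absorbing the irregular topological contributions from $2$-cuts.
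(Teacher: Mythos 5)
Your outline identifies the right family of tools --- a clean ({\digonal}) embedding, Lemma~\ref{cor:faces} to show most faces are short, Lemmas~\ref{lem:alem} and~\ref{lem:aux4} applied in the dual to count edges with both incident faces short, and then a restriction to heavy-vertex-free short faces followed by a greedy packing --- and this broadly matches the structure of the paper's argument for what is essentially the case where $|F_W|$ (the set of faces incident with a degree-$\geq 500$ vertex) is a small fraction of $F$. However, there is a genuine gap in the heavy-vertex control step, which is precisely the part you flag as ``the hardest step.''

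The bound you propose, that each heavy vertex $w$ spoils at most $\sum_{f\ni w,\, f\text{ short}}(l(f)-2)\leq 4999\,d(w)$ doubly-short-faced edges, does not close the argument. Summing over $V_\Delta$ gives a total spoilage bound of at most $4999\sum_{w\in V_\Delta}d(w)$, and there is no a priori control on $\sum_{w\in V_\Delta}d(w)$ beyond the trivial $\leq 2|E|$; so the spoiled count can exceed the number of doubly-short-faced edges by a factor of roughly $10^4$, and your subtraction yields nothing. The bound $|V_\Delta|\leq|E|/250$ does not help here, because what matters is the degree sum, not the number of heavy vertices. The paper resolves this by a separate case (Claim~\ref{cla:anot2}, with its four subclaims): when $\sum_{u\in W}d(u)$ is large, one shows using inequality~\eqref{eq:imp1} (which rests on the Euler bound $\sum_{i\ge 3}if_i\leq 6|W|-12$ and on Lemma~\ref{lem:aux1} applied to $Y:=W$, not to $Y:=V\setminus(V_\Delta\cup Z)$) that $f_1$, the number of faces containing exactly one heavy vertex, is very large; then a constant fraction of the edges \emph{incident with} heavy vertices have both incident faces in $F_1\setminus\Flong$, and these are $(\ell_0,\Delta_0)$-edges because the unique heavy vertex on each face is an endpoint of the edge. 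Your proposal counts only edges that avoid heavy vertices entirely, so it misses this source of earrings; without it, there is no way to dispose of the heavy-degree-sum regime.

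A secondary but concrete error: the greedy packing constant $1/10001$ is too optimistic, and the claim ``two earrings share an edge iff they share an incident face'' is false (if $g$ lies on the boundary of a face of $\earr(e)$ and a different face of $\earr(f)$, the two earrings conflict without sharing a face). The correct degree bound on the conflict graph is on the order of $(2\ell_0)(2\ell_0+1)\approx 10^8$, which is what forces the final constant to be $\xcoonenum$ rather than something $10^4$ times larger; with the smaller packing loss you propose, the numerology would not match the lemma's statement.
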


\begin{proof} 
Throughout the proof, we make use of several constants that are either very small,
very close to $1$, or somewhat large. In order to simplify the whole
discussion, we first proceed to introduce these constants. We let
$\somelen=5000$,
$\somedeg=500$,
$\xcoone= 10^{-10}$,
$\xcotwo= 10^{-5}$,
$\xcothree= 999/1000$,
$\xcofou= 1/1000$,
$\xcofiv= 999$,
$\xcosix= 36/5000$, and
$\zcoone=3(10^{-10})$.

It is a trivial observation that every planar graph has a {\digonal}
plane embedding ({\digonal} embeddings are defined  before
Lemma~\ref{cor:faces}). 
Throughout the proof we consider a fixed {\digonal} embedding of $G$ in the plane.
Let $F$ denote the set of all faces of $G$, and let $t:=|Z|$.

\begin{claim}\label{cla:suf0}
It suffices to show that 
there are at least
$(\somekon+1)\cdot(\zcoone|F|-\xcotwo t)$ $(\somelen,\somedeg)$-edges.
\end{claim}

\begin{proof} 
Consider
the graph $H$ whose vertices are the $\sse$-edges of $G$,
with two distinct $\sse$-edges $e,f$ adjacent if $\earr(e)$ and $\earr(f)$
have some edge in common. 

We note that $H$ has maximum degree at most $\somekon$. 
This follows at once from the following two easy observations: 
(i) for each $\sse$-edge $e$, $\earr(e)$ has at most
$2\somelen$ edges other than $e$; and (ii) each edge of $G$ belongs to
at most $2\somelen + 1$ $\sse$-earrings of the form $\earr(f)$ for some edge $f$. 

Thus, $V(H)$ has a
stable set of size at least $|V(H)|/(\somekon+1)$.
Suppose that $G$ has  
at least $(\somekon+1)\cdot(\zcoone|F|-\xcotwo t)$ $\sse$-edges;
that is, 
$|V(H)| \ge (\somekon+1)\cdot(\zcoone|F|-\xcotwo t)$.
Then $H$ has a stable
set $S$ of size at least $\zcoone|F|-\xcotwo t$; that is,
there is a collection of at least 
$\zcoone|F|-\xcotwo t$ pairwise edge-disjoint 
$(\somelen,\somedeg)$-earrings.

Since $G$ has minimum
degree at least $3$, a routine Euler formula argument yields that $|F|
\ge |E|/3 + 2$. Thus there are at least 
$\zcoone(|E|/3 + 2) - \xcotwo t > \xcoone |E| - \xcotwo t$ 
pairwise edge-disjoint 
$(\somelen,\somedeg)$-earrings, as required in
Lemma~\ref{lem:workhorse}. 
\end{proof}

Let $W$ be the set of those vertices of $G$ with degree at least
$\somedeg$, and let $F_W$ denote the set of faces of $G$ that are
incident with some vertex in $W$. 
For each integer $j\ge 1$, let $F_j$
denote the set of those faces of $G$ incident with exactly $j$
vertices
in $W$ (and perhaps other vertices in $V\setminus W$), and let
$f_j=|F_j|$. 
Note that $F_W$ is the disjoint union $\bigcup_{i \ge 1}
{F_{i}}$.

Let $\Flong$ (respectively, $\Fshort$) denote the collection of faces
of $G$ with length greater than (respectively, at most) $\somelen+1$,
and let $\flong:=|\Flong|$ and $\fshort:=|\Fshort|$.  It follows
immediately from Lemma~\ref{cor:faces}
that
\begin{equation}\label{eq:fsho1}
\fshort \ge \xcothree|F|.
\end{equation}


Since $F$ is the disjoint union of $\Flong$ and $\Fshort$, then
$|F| = \flong + \fshort$, and so
$\fshort \ge \xcothree(\flong + \fshort)$ implies 
$\fshort \ge (\xcothree/(1-\xcothree))\flong$.
Note that  $\xcofiv=\xcothree/(1-\xcothree)$. Therefore,
\begin{equation}\label{eq:fsho2}
\fshort \ge \xcofiv \flong.
\end{equation}

We note that $\sum_{u\in W}d(u)=\sum_{i \ge 1}if_i$.
A routine application of Euler's formula yields
that $\sum_{i\ge 3} if_i \le 2 (3|W| - 6) = 6|W| - 12$. 
Since all vertices of $Z$
have degree $4$ it follows that $W \subseteq V\setminus Z$, and so we
can apply Lemma~\ref{lem:aux1}, to obtain $f_2 \le 3|W|
+ t + |E_W|$. Combining these observations we obtain

\begin{equation}\label{eq:imp1}
f_1 \ge 
\sum_{u\in W} d(u) - 12|W| - 2|E_W| - 2t  + 12.
\end{equation}

\begin{claim}\label{cla:anot2}
If $|F_W| > 24t + 24\xcofou\fshort$, then Lemma~\ref{lem:workhorse} follows.
\end{claim}

\begin{proof}
We establish four subclaims, and finally show that the proof follows easily
from them.

\vglue 0.3 cm
\noindent{\sc Subclaim A }{\em 
If $\qu > 6|W| -12 + \xcofou\fshort$, then Lemma~\ref{lem:workhorse} follows.
}
\vglue 0.3 cm

\begin{proof}
If $e_1, e_2, e_3$ are parallel edges with common
endpoints $u,v$, and $e_2$ is in the disc bounded by the digon formed
by $e_1$ and $e_3$, then $e_2$ is a {\em sheltered} edge. 
By Euler's formula, a simple graph on $|W|$ vertices has at most
$3|W|-6$ edges. Since the embedding of $G$ is {\digonal}, 
it follows that the subgraph of $G$ induced by $W$ has at least
$\qu - 2(3|W| - 6)=\qu - 6|W| + 12$ sheltered edges. 
The fact that $G$ is {\digonal} also implies that
each sheltered edge is a $\sse$-edge, and so $G$ has at least $\qu -
6|W| + 12$ $\sse$-edges.

Suppose that $\qu > 6|W| - 12 + \xcofou\fshort$. Then $G$ has at least
$\xcofou\fshort$ $\sse$-edges. Using (\ref{eq:fsho1}), it follows that
$G$ has at least $\xcothree\xcofou|F|$ $\sse$-edges.  The result now
follows from Claim~\ref{cla:suf0}, since 
$\xcothree\xcofou >
(\somekon+1)\zcoone$. 
\end{proof}

\vglue 0.3 cm
\noindent{\sc Subclaim B }{\em 
If $(1/6)(\sum_{u\in W} d(u)) \le 12|W| + 2t + 2\qu - 12$, then 
$|F_W| < 24t + 24\xcofou\fshort$ or else Lemma~\ref{lem:workhorse}
follows.}

\vglue 0.3 cm 

\begin{proof}
By Subclaim A, under the given hypothesis we may assume
that
$\sum_{u\in W} d(u) \le 72|W| + 12t + (72|W| - 144 +
12\xcofou\fshort) - 72 =  
144|W| + 
12t + 
12\xcofou\fshort 
- 216
< 144|W| + 
12t + 
12\xcofou\fshort$.

Since each vertex in $W$ has degree at least $\somedeg$, it follows
that $\somedeg|W| \le \sum_{u\in W} d(u)$. Hence, $ |W| < (12t +
12\xcofou\fshort)/(\somedeg-144)$.  On the other hand, obviously
$|F_W| \le \sum_{u\in W} d(u)$, and so $|F_W| < 144(12t +
12\xcofou\fshort)/(\somedeg-144) + 12t + 12\xcofou\fshort$.  Since
$144/(\somedeg-144) \le 1$, this implies
$|F_W| < 12t + 12\xcofou\fshort + 12t + 12\xcofou\fshort= 24t +
24\xcofou\fshort$.
\end{proof}

\vglue 0.3 cm
\noindent{\sc Subclaim C }{\em 
If $(1/6)(\sum_{u\in W} d(u)) \le \flong$, then
$|F_W| \le 6\fshort/\xcofiv$.}
\vglue 0.3 cm

\begin{proof}
Suppose that $(1/6)(\sum_{u\in W} d(u)) \le \flong$.
The obvious inequality $|F_W| \le \sum_{u\in W} d(u)$ then implies
that 
$|F_W| \le 6\cdot \flong$. 
The required inequality follows from (\ref{eq:fsho2}).
\end{proof}

\vglue 0.3 cm
\noindent{\sc Subclaim D }{\em 
If $(1/6)(\sum_{u\in W} d(u)) > 12|W| + 2t + 2\qu - 12$ and
\\ $(1/6)(\sum_{u\in W} d(u)) > \flong$, then
$|F_W| \le \xcosix \fshort$
or else Lemma~\ref{lem:workhorse}
follows.}
\vglue 0.3 cm

\begin{proof}
We show
that, under the given hypotheses, if $|F_W| > \xcosix \fshort$, then there are at least
$(\xcothree\xcosix/3)|F|$ $\sse$-edges; the subclaim then follows from
Claim~\ref{cla:suf0}, since
$(\xcothree\xcosix)/3 \ge (\somekon+1) \cdot \zcoone$.

It follows that, under the current hypotheses, 
\begin{equation}\label{eq:imp2}
\flong < (1/3)\sum_{u\in W} d(u)  - 12|W| - 2t - 2\qu + 12.
\end{equation}

Since $|F_1\setminus\Flong| \ge f_1 -
\flong$, 
using  (\ref{eq:imp1}) and (\ref{eq:imp2}) we obtain
$$
|F_1\setminus\Flong| \ge \sum_{u\in W} d(u) - 12|W| - 2\qu - 2t + 12 
- \flong > (2/3)\sum_{u\in W} d(u).
$$

Since each face in $F_1$ is (by definition) incident with exactly one
vertex in $W$, the inequality $|F_1\setminus\Flong| > (2/3)\sum_{u\in
  W} d(u)$ implies that at least $1/3$ of the edges incident with $W$ have their two incident faces in
$F_1\setminus\Flong$. Note that all such edges are
$(\somelen,\somedeg)$-edges. We conclude that there are at least $(1/3)
\sum_{u\in W} d(u)$ $(\somelen,\somedeg)$-edges incident with 
$W$. Since obviously $\sum_{u\in W} d(u) \ge |F_W|$, this implies
that there are at least $|F_W|/3$ $\sse$-edges.

Using the assumption $|F_W| > \xcosix \fshort$ and (\ref{eq:fsho1}),
it follows that there are at least
$(\xcothree\xcosix/3)|F|$
$\sse$-edges, as required.
\end{proof}

We now complete the proof of Claim~\ref{cla:anot2}.

Since the hypotheses of
Subclaims B, C, and D are 
exhaustive, it
follows from these subclaims that either 
we may assume that
$|F_W| < 24t + 24\xcofou\fshort$, 
or 
$|F_W| \le 6\fshort/\xcofiv$, or we may assume that
$|F_W| \le \xcosix\fshort$. 
Since $\max\{24\xcofou,6/\xcofiv,\xcosix \}=24\xcofou$,
it follows that 
we may assume that $|F_W| < 24t + 24\xcofou\fshort$.
\end{proof}

We now complete the proof of Lemma~\ref{lem:workhorse}.

A face is {\em white} if it is either 
in $\Fshort
\setminus F_W$ or has length exactly $2$, and 
is {\em black} otherwise.
We let $\Fwhite$ (respectively, $\Fblack$) denote the set of all white
(respetively, black) faces.
Let $\fwhite:=|\Fwhite|$, and 
$\fblack:=|\Fblack|$. 

Now consider the dual $G^o$ of $G$.  The $2$-connectivity of $G$
implies that $G^o$ is also $2$-connected. 
Let us say that an edge in
$G^o$ is {\em white} if its endpoints are both white (faces in
$G$). 

The key (and completely straightforward) observation is that
the  edge of $G$ associated to each white edge
is an $\sse$-edge.  Our final goal is to prove that
there are many white edges.

Every face in $\Fblack$ is either in $\Flong$ or in $F_W$, and so
$\fblack \le \flong + |F_W|$. Using (\ref{eq:fsho2}),
Claim~\ref{cla:anot2}, and the obvious
inequality  $\fshort \le |F|$, we obtain
\begin{equation}
\fblack \le 24t + (24\xcofou + 1/\xcofiv) |F|.
\end{equation}

By Lemma~\ref{lem:aux4}, $G^o$ has at most
$3\fblack + t$ 
{\topedge}s
with both endpoints black. Lemma~\ref{lem:alem} (applied to $G^o$) then
implies that there are at least
$|F|/2 - (
3\fblack + t)/2 - (3/2)\fblack
=
|F|/2 - 3\fblack - t/2
\ge 
(1/2 - 
3(24\xcofou + 1/\xcofiv)
)|F|
- (145/2)t
$
white edges. 

As we have observed, the edge of $G$ associated to each white edge 
is an $\sse$-edge. 
Thus there are at least
$
(1/2 - 
3(24\xcofou + 1/\xcofiv)
)
|F|
- (145/2)t$\
$\sse$-edges.
Since $ 
1/2 - 
3(24\xcofou + 1/\xcofiv)
\ge
(\somekon+1)\cdot\zcoone$ 
and 
$145/2 \le (\somekon+1)\cdot\xcotwo$, 
then
we are done by Claim~\ref{cla:suf0}.
\end{proof}

\section{Earrings in nonplanar graphs}\label{sec:earrings2}

\begin{lemma}\label{lem:earnonplanar}
Let $G=(V,E)$ be a $2$-connected irreducible graph.
Then $G$ has at least 
$\xcoonenum|E| - \xcoSevennum
\ucr(G)$ 
pairwise edge-disjoint
$(5000,500)$-earrings. 
\end{lemma}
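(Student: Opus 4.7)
The plan is to reduce Lemma~\ref{lem:earnonplanar} to its planar counterpart Lemma~\ref{lem:workhorse} by passing to the planarization of an optimal drawing of $G$. Fix a good optimal drawing of $G$ with $k := \ucr(G)$ crossings, and let $G'$ be its planarization: insert a new degree-$4$ ``dummy'' vertex at each crossing, splitting each of the two crossing edges into two pieces. Then $G'$ is planar with $|V(G')|=|V|+k$ and $|E(G')|=|E|+2k$, and the set $Z$ of $k$ dummies consists entirely of degree-$4$ vertices, each having four distinct neighbors (using that the drawing may be assumed good: edges sharing an endpoint do not cross, no three edges meet at a common point, and no two edges cross more than once).

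Next I would verify that $G'$ and $Z$ satisfy the hypotheses of Lemma~\ref{lem:workhorse}. The graph $G'$ inherits $2$-connectivity from $G$: the key point is that a pair of crossing edges in an optimal drawing cannot form a $2$-edge-cut of $G$, since such a cut could be rerouted to avoid the crossing. Each vertex of $G'$ is adjacent to at least three distinct vertices: dummies have four distinct neighbors by construction, and for an original vertex $u$ this number equals the number of distinct neighbors in $G$, which is at least three because irreducibility forbids $u$ from having fewer (otherwise $u$ together with its incident edges would form a $vw$-blob at its two-or-fewer neighbors). For the anchor property of $Z$, condition~(2) follows because any nontrivial $\{x,y\}$-bridge of $G'$ containing no dummy consists entirely of original vertices and uncrossed edges, so in the drawing of $G$ it forms a planar $\{x,y\}$-bridge with $x,y$ on a common face, i.e., an $xy$-blob in $G$, contradicting irreducibility. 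Condition~(1), that no dummy lies in a $2$-vertex-cut of $G'$, is established by an analogous forbidden-structure argument combining irreducibility with drawing optimality.

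Applying Lemma~\ref{lem:workhorse} to $G'$ with anchor $Z$ yields at least $\xcoonenum|E(G')|-\xcotwonum|Z| = \xcoonenum(|E|+2k)-\xcotwonum k$ pairwise edge-disjoint $(5000,500)$-earrings in $G'$. I then discard every earring that visits some dummy. Since each dummy $z$ has degree $4$, and any earring visiting $z$ as an internal vertex of one of its paths uses at least two of $z$'s four incident edges, the pairwise edge-disjointness of the collection forces at most two earrings to contain any fixed $z$; summing over the $k$ dummies, at most $2k$ earrings visit some dummy. Each remaining earring uses only original vertices and only edges of $G$ that are not involved in any crossing, so its length and internal-degree conditions transfer to $G$ immediately, and its condition~(iii) transfers as well since for two uncrossed edges being a $2$-edge-cut of $G'$ is equivalent to being a $2$-edge-cut of $G$. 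The resulting count is at least
\[
\xcoonenum(|E|+2k) - \xcotwonum k - 2k \;\geq\; \xcoonenum|E| - (\xcotwonum+2)k,
\]
matching the statement with $\xcoSevennum = \xcotwonum+2$.

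The main obstacle will be verifying anchor condition~(1), that no dummy $z$ (the planarization of a crossing of edges $e_1,e_2$) lies in a $2$-vertex-cut $\{z,v\}$ of $G'$. Such a cut would amount to the statement that removing $v$ together with the four half-edges at $z$ disconnects $G'$, a mildly partial cut which intuitively should allow a local modification of the drawing that reduces crossings or exhibit a forbidden blob in $G$; making this rigorous requires a careful case analysis of the local structure around the crossing, and is the genuinely delicate part of the reduction.
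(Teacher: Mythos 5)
Your approach is the same as the paper's: planarize an optimal drawing, verify the hypotheses of Lemma~\ref{lem:workhorse} with the crossing vertices as the anchor set $Z$, apply that lemma, and discard the earrings that meet a dummy. The final accounting also matches: each discarded earring uses at least two of the four edges at some dummy, so pairwise edge-disjointness bounds the discarded earrings by $2k$, giving $\xcoonenum(|E|+2k)-\xcotwonum k - 2k \ge \xcoonenum|E| - \xcoSevennum k$.

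The one genuine gap is exactly the step you flag at the end: anchor condition~(1), that no dummy lies in a $2$-vertex-cut of $G'$ (together with the companion fact, needed for $2$-connectivity of $G'$, that no dummy is a cut vertex). Your sketch for $2$-connectivity via ``a crossing pair cannot be a $2$-edge-cut of $G$'' is in the right spirit, but it does not yield the $2$-vertex-cut statement and you explicitly leave that open. The missing idea is a Whitney switching on the planarization: if $\{u,v\}$ is a $2$-vertex-cut of $G'$ with $u$ (and possibly also $v$) a dummy, take a $\{u,v\}$-bridge $B$ and flip it in the plane about $u$ and $v$. This reverses the cyclic order of $B$'s edges around each of $u$ and $v$, so that when the planarization is undone, the two original edges crossing at each dummy in $\{u,v\}$ now meet tangentially rather than transversally; a small perturbation removes the tangencies, producing a drawing of $G$ with strictly fewer than $\ucr(G)$ crossings, a contradiction. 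The same flip performed around a single dummy $z$ rules out $z$ being a cut vertex. This is precisely how the paper closes the step you identify as the delicate part.
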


\begin{proof}
Let $\ell_0:=5000$,  $\Delta_0:=500$, $\xcoone:=\xcoonenum$,
$\xcotwo:=\xcotwonum$, and $\xcoSeven:=\xcoSevennum$.
Let $t:=\ucr(G)$, and let
 $\dd$ be a drawing of $G$ with exactly $t$ crossings.
Let $H$ denote the plane graph that results by regarding the $t$ crossings
as degree $4$ vertices (this is the {\em crossings-to-vertices conversion}), which we colour green to help comprehension
(the other vertices of $H$, each of which corresponds to a vertex in
$G$, are coloured black). 
We claim that (i) each vertex in $H$ is adjacent to at least $3$ other
vertices; 
(ii) no green vertex is part of a $2$-vertex-cut; 
(iii) $H$ is $2$-connected; 
and (iv) the set of green vertices
is an {\anchor} set for $H$.

We start by noting that (i) follows easily from the irreducibility of
$G$, plus the observation that in any
crossing-minimal 
drawing of any graph, the two edges involved in any crossing cannot
have a common endpoint.

By way of contradiction, suppose that $u,v$ are green vertices such
that $\{u,v\}$ is a $2$-vertex-cut in $H$.  It is easy to see that
then there are exactly two $uv$-bridges. Let $B$ be any of these
$uv$-bridges, and let $H'$ denote the plane graph obtained from $H$ by
performing a Whitney switching on $B$ around $u$ and $v$. Now by
reversing the crossings-to-vertices conversion, we obtain from $H'$ a
drawing of $G$ in which the edge intersections corresponding to $u$
and $v$ are tangential, not crossings. Each of these two tangential
edge intersections may be removed with a small perturbation, yielding
a drawing of $G$ with two fewer crossings than $\dd$, contradicting
the crossing-minimality of $\dd$.  This contradiction shows that
$\{u,v\}$ cannot be a $2$-vertex-cut in $H$. A similar contradiction
is obtained from the assumption that $H$ has a $2$-vertex-cut with
exactly one green vertex (in this case one obtains a drawing of $G$
with one fewer crossing than $\dd$). This proves (ii).

The $2$-connectedness of $G$ readily implies that no black vertex can
be a cut vertex of $H$. On the other hand, a similar switching
argument as in the proof of (ii) shows that no green vertex can be a
cut vertex of $H$. This proves (iii).

Now let $u,v$ be black vertices such that $\{u,v\}$ is a
$2$-vertex-cut in $H$, and let $B$ be a nontrivial $uv$-bridge. If $B$ does not
contain any green vertex, then $(B,u,v)$ is clearly a $uv$-blob 
of $G$.  Since this contradicts the irreducibility of $G$,
(iv) follows. 

We can thus apply Lemma~\ref{lem:workhorse} to $H$, and obtain that $H$
has a collection $\ee$ of at least $\xcoone|E(H)| - \xcotwo t$ pairwise edge-disjoint
$\sse$-earrings. If any such earring contains a green vertex, then it
obviously contains at least two edges incident with a green vertex. 
Since these earrings are pairwise edge-disjoint, it
immediately follows that $\ee$ has a subcollection $\ee'$, with
$|\ee'| \ge |\ee| - 2t$ pairwise edge-disjoint $\sse$-earrings that do
not contain any green vertex.
That is, each earring in $\ee'$ is an $\sse$-earring of $G$.

Therefore, $\ee'$ is a collection at least 
$|\ee|-2t 
\ge \xcoone|E(H)| - (\xcotwo+2)t$ pairwise edge-disjoint
$\sse$-earrings in $G$. Since $|E(H)| \ge |E|$,
it follows that
$|\ee'| \ge \xcoone|E| - (\xcotwo + 2)t
= \xcoone|E| - \xcoSeven t$.
\end{proof}

\section{The embedding method: adding edges with few crossings}\label{sec:emb}

Our main goal is to show that every (sufficiently large) irreducible
graph has a large collection of edges whose removal leaves a graph
with large crossing number. The first main ingredient is the existence of a large collection of
pairwise edge-disjoint $(\ell,\Delta)$-earrings (for some fixed $\ell$
and $\Delta$); this is Lemma~\ref{lem:workhorse}.  The second main ingredient
is the {\em embedding method}, which was used under similar
circumstances by Richter and Thomassen~\cite{rt}, Fox and
T\'oth~\cite{foxtoth}, and \v{C}ern\'{y}, Kyn\v{c}l and
T\'oth~\cite{ckt} (see also~\cite{lei,sssv,ls}). We use the embedding
method to prove the following.

\begin{lemma}\label{lem:work2}
Let $G$ be a graph, 
and let $\ell, \Delta$, and $r$ be positive integers. Suppose that $G$ has
a collection of $r$ pairwise edge-disjoint $(\ell,\Delta)$-earrings.  Then $G$ has a
set $E_0$ of $r$ edges such that $\ucr(G-E_0) > \xcosev \ucr(G) -
\xcoeig r$.
\end{lemma}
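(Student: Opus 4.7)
The plan is to apply the embedding method. Let $\Xi_i=\{e_i\}\cup P_i\cup Q_i$ denote the $r$ given pairwise edge-disjoint $(\ell,\Delta)$-earrings, and set $E_0:=\{e_1,\ldots,e_r\}$, so $|E_0|=r$. The desired conclusion is equivalent to the inequality $\ucr(G)<2\ucr(G-E_0)+(\Delta\ell+\ell^2)r$, so it suffices to exhibit a drawing of $G$ with at most this many crossings.

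I would fix an optimal drawing $\dd$ of $G-E_0$, and extend it to a drawing of $G$ by inserting each $e_i$ as a simple arc in a sufficiently small $\epsilon$-neighborhood of one of the two paths $\alpha_i\in\{P_i,Q_i\}$; the choice of $\alpha_i$ will be specified below. The new crossings introduced by $e_i$ fall into three types: (a)~for each edge $f\notin E(\alpha_i)$ that crosses $\alpha_i$ in $\dd$, the arc $e_i$ picks up one new crossing with $f$; (b)~at each internal vertex $v$ of $\alpha_i$, the arc $e_i$ must detour around $v$, and by choosing the cyclic side at $v$ with fewer incident edges it incurs at most $\lfloor (d(v)-2)/2\rfloor<\Delta/2$ crossings per vertex, hence fewer than $\ell\Delta/2$ detour crossings per earring; (c)~one new crossing between $e_i$ and each other inserted arc $e_j$ for every crossing of $\alpha_i$ with $\alpha_j$ in $\dd$. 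Property~(iii) of the earring definition guarantees that edges of $P_i\cap Q_i$ form $2$-edge-cuts with $e_i$ in $G$, and hence are uncrossed in $\dd$ and contribute no extra cost.

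The central step is the choice of the $\alpha_i$'s. For each individual earring, the min/average inequality gives that the smaller of the number of crossings in $\dd$ on $P_i$ and the number on $Q_i$ is at most half of their sum, which in turn equals half the number of crossings on $T_i:=P_i\cup Q_i$ (using property~(iii) to ignore the harmless shared edges). Since the $\Xi_i$ are pairwise edge-disjoint, the sets $E(T_i)$ are pairwise disjoint in $E(G-E_0)$, so each crossing of $\dd$ is counted in at most two of these per-$T_i$ terms; summing yields that the total number of crossings on the $T_i$'s is at most $2\ucr(G-E_0)$. Choosing $\alpha_i$ to attain the per-earring minimum therefore bounds the total type-(a) contribution by $\ucr(G-E_0)$.

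The hardest step will be controlling the type-(c) pairwise interactions, since each crossing of $\alpha_i$ with $\alpha_j$ in $\dd$ unavoidably forces a new $e_i$-$e_j$ crossing regardless of local side choices. These are absorbed into the slack $(1/2)\ell^2 r$ via the pointwise bound $\text{cr}_\dd(\alpha_i,\alpha_j)\leq|E(\alpha_i)|\cdot|E(\alpha_j)|\leq\ell^2$ (valid in the crossing-optimal $\dd$) together with a careful counting argument exploiting the pairwise edge-disjointness of the earrings to limit how many pairs of earrings contribute significantly. Combining the three contributions produces a drawing of $G$ with at most $\ucr(G-E_0)+\ucr(G-E_0)+(1/2)\ell\Delta r+(1/2)\ell^2 r = 2\ucr(G-E_0)+(1/2)(\Delta\ell+\ell^2)r$ crossings, which implies $\ucr(G)<2\ucr(G-E_0)+(\Delta\ell+\ell^2)r$, as required.
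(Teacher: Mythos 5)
Your proposal takes the same overall route as the paper: set $E_0$ to be the set of base edges of the $r$ earrings, take an optimal drawing $\dd$ of $G-E_0$, and reinsert each $e_i$ alongside one of $P_i,Q_i$. The accounting, however, diverges in two places, one of which is a genuine gap.

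First, a technical point about your type-(a) bound. You write that $\min(\mathrm{cr}_\dd(P_i),\mathrm{cr}_\dd(Q_i))$ is at most half the crossings on $T_i:=P_i\cup Q_i$, ``which in turn equals half the number of crossings on $T_i$''. That equality is not correct: $\mathrm{cr}_\dd(P_i)+\mathrm{cr}_\dd(Q_i)$ exceeds the number of crossings on $T_i$ by the number of $P_i$--$Q_i$ crossings, and property~(iii) of an earring does not address those (it only concerns shared \emph{edges}, not crossings between a $P_i$-edge and a $Q_i$-edge). These extra terms are exactly the ``Type~1'' crossings in the paper's proof, and they must be budgeted separately (the paper bounds them by $\ell^2$ per earring). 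Also, the paper does not pick each $\alpha_i$ by a per-earring minimum; it introduces the sum $\Sigma$ over all $2^r$ sign sequences and shows each Type~2A crossing contributes exactly $2^r$ to $\Sigma$ and each Type~2B exactly $2^{r-1}$, so by averaging there is one sequence with $\sum_i\chi_2(R_i)\le t$. Your per-earring minimum is in the same spirit and salvageable, but the bookkeeping needs the Type~1 correction.

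The real gap is the type-(c) bound. You claim $\sum_{i<j}\mathrm{cr}_\dd(\alpha_i,\alpha_j)$ can be ``absorbed into the slack $(1/2)\ell^2 r$'' using the pointwise bound $\mathrm{cr}_\dd(\alpha_i,\alpha_j)\le\ell^2$ plus ``a careful counting argument exploiting pairwise edge-disjointness to limit how many pairs contribute.'' But pairwise edge-disjointness of the earrings places no upper bound on the number of pairs that cross: nothing prevents $\Omega(r^2)$ of the pairs $(\alpha_i,\alpha_j)$ from crossing, so the pointwise bound only gives $\binom{r}{2}\ell^2$, not $(1/2)\ell^2 r$. The natural way to control this sum is not through $\ell^2 r$ at all but through $t=\ucr(G-E_0)$: every $\alpha_i$--$\alpha_j$ crossing is a crossing of $\dd$, and each such crossing contributes $2$ to $\sum_i\chi_2(\alpha_i)\le t$, so $\sum_{i<j}\mathrm{cr}_\dd(\alpha_i,\alpha_j)\le t/2$. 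That is the correct bound, but it places the type-(c) cost in the $t$-budget rather than the $r$-budget, so you do not get the $(1/2)(\Delta\ell+\ell^2)r$ slack for free. As written, the proposal's arithmetic does not close; the ``careful counting argument'' is precisely the missing idea, and the claimed form of the bound is false.
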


\begin{proof}
Let $\earr_1, \earr_2, \ldots, \earr_r$ be a collection of pairwise
edge-disjoint $(\ell,\Delta)$-earrings in $G$. For $i=1,2,\ldots,r$,
let $e_i=u_iv_i$ be the base edge of $\earr_i$, and let $P_i, Q_i$ be
the $u_iv_i$-paths such that $\earr_i=P_i\cup Q_i \cup \{e_i\}$. We
shall show that $E_0:=\{e_1, e_2,...,e_r\}$ satisfies the required
property.

Let $t:=\ucr(G-E_0)$, and let $\dd$ be a drawing of $G-E_0$ with $t$
crossings.  The strategy is to extend $\dd$ to a drawing of $G$ by
drawing $e_i$ very close to either $P_i$ or $Q_i$, for
$i=1,2,\ldots,r$. Our aim is to show that this can be done while
adding relatively few crossings.

We analyze several types of crossings of $P_i$ and $Q_i$, for $i=1,2,\ldots,r$. A crossing in $\dd$ is
(i) {\em of Type 1 } if one edge is in $P_i$ and the other edge is in
$Q_i$, for some $i\in\{1,\ldots,r\}$; (ii) {\em of Type 2A } if one
edge is in $P_i\cup Q_i$ and the
other edge is in $P_j\cup Q_j$, for some $i\neq j$,
$i,j\in\{1,\ldots,r\}$; and (iii) {\em of
  Type 2B } if one edge is in $P_i\cup Q_i$ for some
$i\in\{1,\ldots,r\}$ and the other in $E(G)\setminus
\bigcup_{j=1}^r (P_j\cup Q_j)$. 
Note that if a crossing $\times$ involving an edge of $\bigcup_{i=1}^{r}
P_i\cup Q_i$ is neither of Type 1, nor 2A, nor 2B, then the edges
involved in  $\times$ must
be both in $P_i$ or both in $Q_i$, for some $i\in \{1,2,..,r\}$. As we shall see, this
last type of crossing is irrelevant to our discussion.

For $i=1,2,\ldots,r$ and $k\in\{1,2\}$, let $\chi_k(P_i)$ (respectively, $\chi_k(Q_i)$)
denote the number of crossings of Type $k$ that involve an edge in 
$P_i$ (respectively, $Q_i$).

In every crossing-minimal drawing of any graph, no pair of edges cross
each other more than once. Since each of $P_i$ and $Q_i$ has at most
$\ell$ edges, it follows that
\begin{equation}\label{eq:chi1}
\chi_1(P_i) \le \ell^2, \text{\rm for }
i=1,\ldots,r.
\end{equation}

Now let $\rr$ be the set of all sequences $(R_1,R_2,\ldots, R_r)$, with
$R_i\in\{P_i,Q_i\}$ for $i=1,2,\ldots,r$, and consider the sum 
$\Sigma:=\sum_{R \in \rr} 
\bigl(
\sum_{i=1}^r \chi_2(R_i) 
\bigr)$.

We claim that a crossing of Type 2A contributes in exactly
$2^{r}$ to $\Sigma$. To see this, first note that such a crossing involves an edge
of an $R_i\in \{P_i,Q_i\}$ and an edge of an $R_j\in\{P_j,Q_j\}$ for
some $i\neq j$. Let $T_i$ (respectively, $T_j$) be the element in
$\{P_i,Q_i\}\setminus R_i$ (respectively, $\{P_j,Q_j\}\setminus R_j$).
There are $2^{r-2}$ sequences in $\rr$ that include both $R_i$ and
$R_j$, and so for each such sequence, the crossing contributes in $2$
to $\Sigma$. There are $2^{r-2}$ sequences in $\rr$ that include $R_i$
and do not include $R_j$, and so for each such sequence, the crossing
contributes in $1$ to $\Sigma$.
Analogously, there are $2^{r-2}$ sequences in $\rr$ that include $R_j$
and do not include $R_i$, and so for each such sequence, the crossing
contributes in $1$ to $\Sigma$. Therefore each crossing of Type 2A
contributes  in $2\cdot 2^{r-2} + 2^{r-2} + 2^{r-2}=2^r$ to $\Sigma$,
as claimed.
Note that this reasoning assumes that no crossing
of Type 2A is in both $P_i$ and $Q_i$ for the same $i$. This is 
immediate if $P_i$ and $Q_i$ are edge-disjoint, but we recall from our
definition of earring that $P_i$ and $Q_i$ may share edges. However,
the validity of our reasoning follows since (again, by the definition of
earring) any edge $f\in E(P_i)\cap E(Q_i)$ is a cut edge of $G-e_i$,
from which it follows that $f$ cannot be crossed in any optimal drawing
of $G-E_0$. 

We also note that a crossing of Type 2B contributes
to $\Sigma$ in exactly $2^{r-1}$. Indeed, such a crossing involves
(for some fixed $i$) an
edge of $R_i$ and an edge that belongs to no $R_j$; it contributes in
$1$ to $\chi(R_i)$, and there are $2^{r-1}$ sequences in $\rr$
that include $R_i$. (As in the previous paragraph, we remark that we
are making use of the valid assumption that no crossing is in
both $P_i$ and $Q_i$ for the same $i$). 

In conclusion, each crossing of Type 2A or 2B contributes to $\Sigma$
in at most $2^r$. 
Since only crossings of Types 2A and 2B contribute to $\Sigma$, and
$\dd$ has $t$ crossings in total, we conclude that 
$\sum_{R \in \rr} 
\bigl(
\sum_{i=1}^r \chi_2(R_i) 
\bigr) 
\le 2^{r}t. 
$
Since $|\rr| = 2^r$, 
it follows that 
for some sequence
$(R_1,R_2,\ldots,R_r)\in\rr$, $\sum_{i=1}^r \chi_2(R_i) \le t$. 
By relabeling (exchanging) $P_i$ and $Q_i$ if necessary, we may assume without any
loss of generality that $R_i=P_i$ for each $i=1,2,\ldots,r$, and so 
\begin{equation}\label{eq:chi2}\
\sum_{i=1}^r \chi_2(P_i) \le t.
\end{equation}

Now note that some $P_i$ may have self-crossings. However, for each
$i$ there is a simple curve $\alpha_i$, contained in $P_i$, joining
$u_i$ and $v_i$.  The definition of crossings of types 1, 2A, and 2B
obviously extend to the crossings on each $\alpha_i$, and so
(\ref{eq:chi1}) and (\ref{eq:chi2}) imply that $\chi_1(\alpha_i) \le
\ell^2$ for $i=1,2,\ldots,r$, and $\sum_{i=1}^r \chi_2(\alpha_i) \le t$.  Moreover (this
is the effect of having obtained $\alpha_i$ by avoiding the
self-crossings of its corresponding $P_i$), for $i=1,2,\ldots,r$, each
crossing of $\alpha_i$ is of one of these types.

The idea is to draw each $e_i$ very close to its corresponding
$\alpha_i$. There are two kinds of crossings on the resulting drawings
of $e_i$, $i=1,\ldots, r$. Some crossings occur as we traverse $e_i$
and pass very close to a crossing of $\alpha_i$. The 
inequalities in the previous paragraph imply that there are, in total,
at most 
$\ell^2r + t$ 
crossings of this first kind.  The second kind of crossing
occurs as we pass very close to a vertex in $\alpha_i$, and cross some
edges incident with this vertex. Since each such vertex is an internal
vertex of some $P_i$ (that is, has degree $<\Delta$) and there are at
most $\ell-1$ internal vertices in each $P_i$, we conclude that each
$e_i$ has fewer than $\Delta\ell$ crossings of this second kind. Thus
in total there are fewer than $\Delta\ell r$ crossings of the second
kind.

We conclude that all the edges 
$e_1, e_2, \ldots, e_r$ may be added to the drawing $\dd$ of $G-E_0$ by
introducing fewer than
$(\Delta\ell + \ell^2)r + t$ crossings. Since $t=\ucr(G-E_0)$, it
follows that
$\ucr(G) <  2\ucr(G-E_0) +
(\Delta\ell + \ell^2)r$ or, equivalently,
$\ucr(G-E_0) > (1/2)\ucr(G) - (1/2)(\Delta\ell + \ell^2)r$.
\end{proof}

If we are interested in removing only one edge (as we are in
Theorem~\ref{thm:main3}), we can improve the $\xcosevnobrackets$
coefficient in Lemma~\ref{lem:work2} to $2/3$, as the following
statement shows.

\begin{lemma}\label{lem:work3}
Let $G$ be a graph, 
and let $\ell$ and $\Delta$ be positive integers. Suppose that $G$ has
an $(\ell,\Delta)$-earring.  Then $G$ has an edge $e$ 
such that $\ucr(G-e) > (2/3) \ucr(G) -
\xcoeigtwothirds$.
\end{lemma}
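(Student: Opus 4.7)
The plan is to run the argument of Lemma~\ref{lem:work2} in the special case $r=1$, and observe that a single-earring averaging gives $2/3$ rather than $1/2$.

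Let $\Xi = e\cup P\cup Q$ be the given $(\ell,\Delta)$-earring with base $e=uv$. Set $t:=\ucr(G-e)$ and take an optimal drawing $\dd$ of $G-e$ (with exactly $t$ crossings). I would classify the crossings of $\dd$ exactly as in the proof of Lemma~\ref{lem:work2}: Type~1 if the two crossing edges lie one in $P$ and one in $Q$, and Type~2 (the Type~2B of that proof, since Type~2A is absent when $r=1$) if one edge lies in $P\cup Q$ and the other in $E(G)\setminus(P\cup Q)$. Define $\chi_1(P),\chi_2(P),\chi_1(Q),\chi_2(Q)$ as there.

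The key point is the sharper averaging available when there is only one earring. Each Type~2 crossing involves exactly one edge of $P\cup Q$, so it contributes to exactly one of $\chi_2(P)$ or $\chi_2(Q)$ (if an edge happens to lie in both $P$ and $Q$, the earring definition makes it a cut edge of $G-e$ and therefore uncrossed in $\dd$). Consequently
\[
\chi_2(P)+\chi_2(Q)\le t,
\]
so by pigeonhole we may assume $\chi_2(P)\le t/2$. The bound $\chi_1(P)\le \ell^2$ is immediate since $P$ has at most $\ell$ edges and no two edges cross twice in $\dd$.

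Now I would imitate the final paragraph of the proof of Lemma~\ref{lem:work2}: extract a simple sub-curve $\alpha\subseteq P$ from $u$ to $v$ (removing self-crossings of $P$ can only decrease both $\chi_1$ and $\chi_2$), and draw $e$ in a tiny tubular neighborhood of $\alpha$. Each crossing along $\alpha$ produces one new crossing for $e$, contributing at most $\chi_1(\alpha)+\chi_2(\alpha)\le \ell^2+t/2$; and passing each of the at most $\ell-1$ internal vertices of $\alpha$ (each of degree $<\Delta$) contributes fewer than $\Delta\ell$ additional crossings. The resulting drawing of $G$ therefore has fewer than $t+\ell^2+t/2+\Delta\ell = (3/2)t + (\Delta\ell+\ell^2)$ crossings, giving
\[
\ucr(G) < \tfrac{3}{2}\ucr(G-e)+\Delta\ell+\ell^2,
\]
which rearranges to $\ucr(G-e)>(2/3)\ucr(G)-(2/3)(\Delta\ell+\ell^2)$, as claimed. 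There is no real obstacle; the entire content is that the averaging step, which in Lemma~\ref{lem:work2} yields only $\chi_2\le t$ (hence the factor $1/2$), gains a factor of two when $r=1$ because we can select the better of $P,Q$.
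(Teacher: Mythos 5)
Your proof is correct and follows essentially the same route as the paper's: run the embedding argument of Lemma~\ref{lem:work2} with $r=1$, observe that Type~2A crossings disappear so $\chi_2(P)+\chi_2(Q)\le t$, and then a pigeonhole choice of the better of $P,Q$ gives $\chi_2\le t/2$, which sharpens the coefficient from $1/2$ to $2/3$. The remaining bookkeeping (bounding $\chi_1$ by $\ell^2$, extracting a simple curve $\alpha$, the $\Delta\ell$ vertex-passing term) matches the paper's proof exactly.
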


\begin{proof}
The proof is essentially the same as the proof of
Lemma~\ref{lem:work3}, with the following favourable exception. If
we consider only one earring, then $r=1$, and so there are no crossings of Type 2A.  Each crossing
of Type 2B
contributes to $\Sigma$ in at most $1$, and so $\chi_2(P_1) +
\chi_2(Q_1) \le t$. By exchanging
$P_1$ and $Q_1$ if necessary, we may assume that $\chi_2(P_1) \le t/2$. 

In parallel to the last paragraph of the proof of
Lemma~\ref{lem:work3}, in the present case we conclude that 
the edge $e_1$ may be added to the drawing $\dd$ of $G-E_0 =
G-e_1$ by introducing fewer than 
$(\Delta\ell + \ell^2) + t/2$ crossings. Since $t=\ucr(G-e_1)$, it
follows that
$\ucr(G) <  (3/2)\ucr(G-e_1) +
\Delta\ell + \ell^2$ or, equivalently,
$\ucr(G-e_1) > (2/3)\ucr(G) - (2/3)(\Delta\ell + \ell^2)$.
\end{proof}

\section{Proof of Theorems~\ref{thm:main2} and~\ref{thm:main3}}\label{sec:proofmain}


\begin{proof}[Proof of Theorem~\ref{thm:main2}]
Let $\ell_0:=5000$ and $\Delta_0:=500$, $\xcoone:=\xcoonenum$, and
$\xcoSeven:=\xcoSevennum$.  
Let $k$ be a positive integer and let $\epsilon > 0$.  
Define $\gamma:= \epsilon/(\xcoeigzero)$ and 
$m_0:= 
((\xcoSeven + \gamma)k)/\xcoone$.
Let $G=(V,E)$ be a $2$-connected irreducible graph with 
$\ucr(G) = k$ and at least 
$m_0$ edges.

Lemma~\ref{lem:earnonplanar} implies that $G$ has a collection
of at least
$\xcoone|E| - \xcoSeven k$ pairwise
edge-disjoint $\sse$-earrings. Since $|E| \ge ((\xcoSeven + \gamma)k)/\xcoone$, it follows
that $G$ has a collection of at least 
$\gamma k$ pairwise edge-disjoint $\sse$-earrings.
Thus, by Lemma~\ref{lem:work2}, $G$ has
a collection $E_0$ of at least $\gamma k$ edges such that
$\ucr(G-E_0) > \xcosev \ucr(G) - \xcoeigzero\gamma k 
= \xcosev \ucr(G) - \epsilon k =
(\xcosev-\epsilon) \ucr(G)$.
\end{proof}

If $u,v$ are vertices of a graph $G$, a {\em double $uv$-path} is a
subgraph of $G$ that consists of a $uv$-path with all its edges
doubled.


\begin{proof}[Proof of Theorem~\ref{thm:main3}]
Let $\ell_0:=5000$, $\Delta_0:=500$, 
$\xcoone:=\xcoonenum$, and
$\xcoSeven:=\xcoSevennum$.  Let $k$ be a positive integer, and let $m_1:=
(\xcoSeven k)/\xcoone + 1$.  We prove that if $G=(V,E)$ is a $2$-connected 
graph in which each vertex is adjacent to at least $3$ vertices,
$\ucr(G) = k$,  and $G$ has at least $m_1$ edges, then $G$ has an
edge $e$ such that $\ucr(G-e) > (2/3)\ucr(G) - 10^{8}$.

Suppose first that $G$ is not irreducible, and let $(B,u,v)$ be a
minimal blob in $G$, (that is, $G$ has no blob $(B',u',v')$ such that
$B'$ is a subgraph of $B$). The minimality of $B$ implies that $B$ has
no cut edges, and so its width $w(B)$ is at least $2$. It is easy to
see that if every edge of $B$ is in a $2$-edge-cut separating $u$ and
$v$, then $B$ is a double $uv$-path. This clearly contradicts the
$X$-minimality of $G$, and so we conclude that there is an edge $e$ in
$B$ such that the $uv$-blob (in $G-e$) $B-e$ has width at least $2$.

By way of
contradiction, suppose that $\ucr(G-e) < (2/3)\ucr(G)$. It is
straightforward to see that there is a crossing-minimal drawing
$\dd$ of $G-e$ in which the set $E'$ of edges crossed in $B-e$ form a
smallest $uv$-{\em edge cut} (that is, a minimum size edge cut in
$B-e$ separating $u$ and $v$), with each edge in $E'$ crossed the same
number (say $s$) of times. In particular, $\ucr(G-e) \ge |E'|s \ge
2s$.  The planarity of $B-e$ (with $u,v$ in the same face) implies
that: (i) if $e$ is in distinct components of $(B-e)-E'$, then $e$ can be
added to $\dd$ by introducing exactly $s$ crossings; and (ii) otherwise, $e$ can be
added to $\dd$ without introducing any crossings. In either case, the
result is a drawing of $G$ with at most $\ucr(G-e) + s$ crossings, and
so $\ucr(G) \le \ucr(G-e) + s$. The assumption $\ucr(G) >
(3/2)\ucr(G-e)$ then implies $\ucr(G-e) < 2s$, contradicting that
$\ucr(G-e) \ge 2s$. Thus $\ucr(G-e) \ge (2/3)\ucr(G) > (2/3)\ucr(G) -
10^8$.

Suppose finally that $G$ is irreducible. 
Lemma~\ref{lem:earnonplanar} then implies that $G$ has 
at least
$\xcoone|E| - \xcoSeven k$ pairwise
edge-disjoint $\sse$-earrings. Since $|E| \ge (\xcoSeven k)/\xcoone + 1$, it follows
that $G$ has at least one  $\sse$-earring.
Thus, by Lemma~\ref{lem:work3}, $G$ has
an edge $e$ such that
$\ucr(G-e) > (2/3) \ucr(G) - \xcoeigtwothirds > (2/3)\ucr(G) - 10^8$.
\end{proof}

\section{Bounded decay and expected crossing numbers}\label{sec:bd}

The pioneering work of Richter and Thomassen, as well as our work
in this paper, are naturally described as ``bounded decay'' results:
the existence of sets of edges whose removal does not decrease
arbitrarily the crossing number. The papers by Fox and T\'oth~\cite{foxtoth} and 
by \v{C}ern\'{y}, Kyn\v{c}l and T\'oth~\cite{ckt} concern themselves
with ``almost no decay'' results: the existence of sets of edges whose
removal results in a very small decrease of the crossing number. 


As an additional motivation to bounded decay results, we discuss in this
section a connection with expected
crossing numbers, a concept recently introduced by Mohar and
Tamon~\cite{mohartamon,mohartamon2}.

\subsection{Expected crossing numbers and decay of crossing numbers}

Given a drawing $\dd$ of a graph
$G=(V,E)$, and a {\em weight function} $w:E\to \real_+$, define the
{\em crossing weight} $\ucr(\dd,w)$ as $\sum_{\{e,f\} \in \bbbX(\dd)}
w(e) w(f)$, where $\bbbX(\dd)$ is the set of all pairs of edges that
cross each other in $\dd$. The pair $(G,w)$ is a {\em weighted graph},
and the {\em weighted crossing number} of $(G,w)$ is
$\ucr(G,w):=\min_{\dd} \ucr(\dd,w)$, where the minimum is taken over all
drawings $\dd$ of $G$.  Now take the weights on the edges to be
independently identically distributed random variables, with uniform
distributions on the interval $[0, 1]$. The expected value of
$\ucr(G,w)$ under this distribution is the {\em expected crossing
  number} of $G$, and is denoted $\bbbE(\ucr(G))$.

Let us say that a family $\gg$ of graphs is {\em robust} (or, more
precisely, $\epsilon$-{\em robust}) if there
exist a constant $\epsilon:=\epsilon(\gg)$ and an $n(\gg)$ such that
$\bbbE(\ucr(G)) \ge \epsilon\cdot \ucr(G)$ for every graph $G$ in
$\gg$ with at least $n(\gg)$ vertices.  

Mohar and Tamon proved in~\cite{mohartamon} that $\bbbE(\ucr(K_n))$ is
$\Theta(n^4)$. From this it follows immediately that the family of all
complete graphs is robust. Moreover, it follows from their Crossing
Lemma for Expectations (Theorem 5.2 in~\cite{mohartamon}) that for
each fixed $\gamma > 0$, the family of graphs with at least
$\gamma\cdot n^2$ edges is also robust
(more precisely, $\epsilon$-robust, where $\epsilon$ might depend on
$\gamma$). 
It is thus natural to inquire
about the robustness of families of sparser graphs.


Our aim in this subsection is to unveil and exploit the close connection between
robustness and  several results and conjectures,
presented in~\cite{ckt}, on the decay of crossing numbers.

In ~\cite{ckt}, \v{C}ern\'{y}, Kyn\v{c}l and T\'oth proved the
following: for each $\epsilon > 0$, there exist $\delta,\gamma > 0$
such that every sufficiently large graph $G$ with $n$ vertices and
$m\ge n^{1+\epsilon}$ edges has a subgraph $G'$ with at most
$(1-\delta)m$ edges such that $\ucr(G') \ge \gamma\cdot\ucr(G)$. This
impressive ``almost no decay'' statement is best possible, in the
sense that (as shown in~\cite{ckt}) one cannot require that {\em
  every} subgraph with $(1-\delta)m$ edges has crossing number at
least $\gamma\cdot\ucr(G)$.  
In this vein, \v{C}ern\'{y}, Kyn\v{c}l and
T\'oth also investigated the following closely related problem. 

Let us say that a family
$\gg$ of graphs is {\em stable} (or, more precisely,
$(\delta,\gamma)$-{\em stable}) if there exist positive constants
$\delta:=\delta(\gg)$, $\gamma:=\gamma(\gg)$, and $n(\gg)$ 
such that for every graph $G\in \gg$ with at least $n(\gg)$ vertices
(and $m$ edges), 
a positive fraction of all subgraphs of $G$ with
$(1-\delta)m$ edges has crossing number at least $\gamma\cdot\ucr(G)$. 
The requirement may be equivalently formulated as follows: if $G'$ is
a random subgraph of $G$ obtained by deleting independently each edge
with probability $\delta$, then w.h.p.~$\ucr(G') \ge
\gamma\cdot \ucr(G)$. 

In the earlier version~\cite{ckt0} of~\cite{ckt}, it was conjectured
that for each $\epsilon > 0$, the family of graphs with
$\Theta(n^{1+\epsilon})$ edges is stable. 
In~\cite{ckt}, it was shown that this is false
for $\epsilon < 1/3$ (we have slightly refined the construction
in~\cite{ckt}, and shown that it does not hold either for $\epsilon =
1/3$; see Theorem~\ref{thm:cktrefined}).  The conjecture
remains open for denser graphs:

\begin{conj}\label{con:ran}
There exists an $\VA \in (1/3,1)$ such that, for each $\epsilon\in (\VA,1]$, 
the family of graphs with $\Theta(n^{1+\epsilon})$ edges is stable. 
\end{conj}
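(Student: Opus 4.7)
The plan is to combine the deterministic \v{C}ern\'{y}--Kyn\v{c}l--T\'oth theorem (which produces \emph{some} subgraph $G'\subseteq G$ on $(1-\delta)m$ edges with $\ucr(G')\ge\gamma\ucr(G)$) with a probabilistic concentration argument that promotes ``exists'' to ``typical''. Stability is precisely the statement that a random $\delta$-thinning of $G$ works with high probability, uniformly over graphs with $\Theta(n^{1+\epsilon})$ edges and $\epsilon$ above some threshold $\VA$.

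First I would try the most direct approach: apply the Crossing Lemma to the random subgraph $G'$. Keeping each edge independently with probability $1-\delta$ gives $m'=(1-\delta+o(1))m$ edges w.h.p., hence $\ucr(G')\ge c(m')^3/n^2\ge c(1-\delta)^3 m^3/n^2$. This is enough for stability precisely when $\ucr(G)=O(m^3/n^2)$, i.e., when the Crossing Lemma is tight for $G$. The regime $m=\Theta(n^2)$ is already essentially covered by the Mohar--Tamon Crossing Lemma for Expectations~\cite{mohartamon}, and one hopes that tightness extends down to some $m=\Theta(n^{1+\VA})$ with $\VA<1$; identifying this threshold and proving tightness uniformly across the corresponding density range is the pivotal sub-problem.

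The main obstacle is that for $\epsilon<1$ the crossing number $\ucr(G)$ can be as large as $\Theta(m^2)=\Theta(n^{2+2\epsilon})$, while the Crossing Lemma only supplies the much smaller $\Omega(m^3/n^2)=\Omega(n^{1+3\epsilon})$. When $\ucr(G)$ sits near the upper end of its range, the direct bound on $G'$ is off by a polynomial factor and the crude argument collapses. The counterexamples constructed in~\cite{ckt} show that this failure is genuine up to $\epsilon=1/3$, which is exactly why the conjecture restricts $\epsilon$ to lie strictly above some $\VA>1/3$.

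To push Crossing-Lemma tightness below the $\epsilon=1$ boundary I would adapt the embedding-method philosophy developed in the present paper and in~\cite{ckt,foxtoth}. The idea is to show that any $G$ of density $n^{1+\epsilon}$ with $\epsilon>\VA$ admits a collection of $\Theta(m)$ pairwise edge-disjoint short-path configurations (of earring type, with constant length and maximum degree) whose edges are distributed across $G$ in a sufficiently independent way that standard Chernoff estimates apply. Given a random $E_0$ of size $\delta m$, one argues that w.h.p.\ a positive fraction of these configurations have their base edge in $E_0$ and their path-part disjoint from $E_0$; Lemma~\ref{lem:work2} then re-embeds these base edges into an optimal drawing of $G-E_0$ at additive cost $O(m)$, yielding $\ucr(G)\le 2\,\ucr(G-E_0)+O(m)$ and hence $\ucr(G-E_0)\ge(1/2-o(1))\ucr(G)$ provided $\ucr(G)=\omega(m)$. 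The hard part will be producing the linear-in-$m$ supply of such configurations in the intermediate-density regime: Lemma~\ref{lem:earnonplanar} of the present paper only delivers $\Omega(m)$ earrings when $\ucr(G)=O(m)$, while the pure density arguments of~\cite{ckt} break down below $\epsilon=1$, so a genuinely new geometric or probabilistic input is required to bridge the gap and pin down the true value of $\VA$.
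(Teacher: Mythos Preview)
The statement you are attempting to prove is labeled \textbf{Conjecture}~\ref{con:ran} in the paper, and the paper explicitly presents it as an open problem (``The conjecture remains open for denser graphs''). There is no proof in the paper to compare your proposal against; the authors only establish the negative side, namely that the family with $\Theta(n^{4/3})$ edges is \emph{not} stable (Theorem~\ref{thm:cktrefined}).

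Your write-up is accordingly not a proof but a research outline, and you acknowledge this yourself in the final sentence: the Crossing Lemma argument only handles graphs for which the lemma is tight, Lemma~\ref{lem:earnonplanar} only supplies $\Omega(m)$ earrings when $\ucr(G)=O(m)$, and the \v{C}ern\'y--Kyn\v{c}l--T\'oth density arguments do not reach below $\epsilon=1$. The gap you identify --- producing a linear-in-$m$ collection of re-embedding configurations in the intermediate-density regime where $\ucr(G)$ may be as large as $\Theta(m^2)$ --- is precisely the missing ingredient, and nothing in your proposal closes it. So this is a sketch of obstacles rather than a proof, which is the appropriate status for a conjecture the paper leaves open.
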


(See also a weaker version put
forward in~\cite{ckt}).

\ignore{We note that such a statement does not hold if we substitute the
density requirement $m\ge n^{1+\epsilon}$ by a weaker bound such as
$m\ge n\cdot 
\log n/(\log{\log n})$. 
Indeed, the graph $G_n$ obtained from
$C_3\ \Box \ C_n$ by substituting each edge with $
\log n/(\log{\log n})
$ 
parallel edges has 
crossing number $n\cdot 
(\log n/(\log{\log n}))^2$, and yet if we take a subgraph $G'_n$ of $G_n$ in which each
edge is chosen independently with probability $1-\delta$ (for any
fixed $\delta>0$), then
w.h.p. $G'_n$ is planar.}


Before moving on to explore the close relationship between
Conjecture~\ref{con:ran} and the robustness of dense graphs, we note
the stability of random graphs:

\begin{remark}
The family of all random graphs $G(n,p)$ with $p> 2/n$, is stable.
\end{remark}

\begin{proof} We start by noting that $\EE(\ucr(G(n,p))
\le p^2 \ucr(K_n) \le (1/10) p^2 n^4.$
From the other side, Spencer and G.~T\'oth (\cite{spencertoth}, Section 4) proved that there
is a $c>0$ such that for $n$ sufficiently large
 the lower bound $\EE (\ucr(G(n,2/n))) > c  n^2$ holds. Standard sparsening of
$G(n,p)$ (keeping each edge with probability $2/(pn)$) gives that for
$p>2/n$, $\EE (\ucr(G(n,p))) >  (c/4)   p^2 n^4$. Using these bounds, together with the
observation that if each edge of a $G(n,p)$ is removed with probability
$\epsilon$ then we obtain a $G(n,(1-\epsilon)p)$, the remark follows.
\end{proof}

The key connection between expected crossing number (robustness) and the decay of
crossing numbers (stability) is the following observation:

\ignore{, besides its
independent interest, Conjecture~\ref{con:ran} is equivalent to the
robustness of dense graphs, as follows:}

\begin{proposition}\label{pro:theconnection}
If a family $\gg$ of graphs is stable, then it is robust. More precisely:
if $\gg$ is $(\delta,\gamma)$-stable, then it is $\delta^2\gamma$-robust.
\end{proposition}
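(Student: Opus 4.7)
The plan is to couple the randomness defining the expected crossing number with the random edge-deletion used in the definition of stability. Given $G \in \gg$ and random weights $w : E(G) \to [0,1]$ drawn i.i.d.~uniformly, define the \emph{heavy subgraph} $H_w := \{e \in E(G) : w(e) \ge \delta\}$. Since the $w(e)$ are i.i.d.~uniform, each edge lies in $H_w$ independently with probability $1-\delta$, so $H_w$ has exactly the distribution of the $(1-\delta)$-percolated subgraph $G'$ featured in the definition of $(\delta,\gamma)$-stability.

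The first main step is a pointwise inequality $\ucr(G, w) \ge \delta^2 \, \ucr(H_w)$, valid for every realization of $w$. This follows from the crude but decisive observation that $w(e)w(f) \ge \delta^2$ whenever $e,f \in H_w$: fixing, for each $w$, an optimal drawing $\dd_w$ with $\ucr(\dd_w,w) = \ucr(G,w)$, one has
\[
\ucr(G,w) \;=\; \sum_{\{e,f\} \in \bbbX(\dd_w)} w(e)w(f) \;\ge\; \delta^2 \cdot \bigl|\{\{e,f\} \in \bbbX(\dd_w) : e,f \in H_w\}\bigr|.
\]
The cardinality on the right is the number of crossings in the drawing of $H_w$ obtained by restricting $\dd_w$ to the edges of $H_w$, and is therefore at least $\ucr(H_w)$.

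The second step is to take expectations and invoke stability. By $(\delta,\gamma)$-stability, for $|V(G)|$ large enough one has $\Pr[\ucr(H_w) \ge \gamma \, \ucr(G)] = 1 - o(1)$, so $\EE(\ucr(H_w)) \ge (1-o(1)) \gamma \, \ucr(G)$. Combining this with the pointwise bound yields
\[
\EE(\ucr(G)) \;=\; \EE(\ucr(G,w)) \;\ge\; \delta^2 \, \EE(\ucr(H_w)) \;\ge\; (1-o(1))\, \delta^2 \gamma \, \ucr(G),
\]
which is the claimed $\delta^2\gamma$-robustness. There is no substantive obstacle; the only minor subtlety is the $(1-o(1))$ factor extracted from the w.h.p.~clause, which is absorbed either by enlarging $n(\gg)$ or by settling for any constant strictly below $\delta^2\gamma$.
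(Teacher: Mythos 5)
Your proposal is correct and follows essentially the same route as the paper: restrict an optimal weighted drawing to the edges of weight at least $\delta$, use $w(e)w(f)\ge\delta^2$ to get the pointwise bound $\ucr(G,w)\ge\delta^2\ucr(H_w)$, observe $H_w$ has the percolation distribution, and invoke stability. If anything you are slightly more careful than the paper in flagging the $(1-o(1))$ factor coming from the w.h.p.\ clause, which the paper's proof silently absorbs.
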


\begin{proof}
Suppose that $\gg$ is a $(\delta,\gamma)$-stable family of
graphs. Let $G$ be a (sufficiently large) graph in $\gg$, and let $w$
be a random weight assignment (sampled from the uniform distribution) on
the edges of $G$. Our aim is to show that the expected value of
$\ucr(G,w)$ is at least $\delta^2\gamma\cdot \ucr(G)$. 

Let $G'$ be the subgraph of $G$ that results by
deleting the edges that receive a weight smaller than $\delta$ under
$w$. Let $\dd$ be a drawing of $G$ that minimizes $\ucr(G,w)$, and let
$\dd'$ be the restriction of $G$ to $G'$. Clearly $\dd'$ has at most
$\ucr(G,w)/\delta^2$ crossings, and so $\ucr(G') \le \ucr(\dd') \le
\ucr(G,w)/\delta^2$. Thus $\ucr(G,w) \ge \delta^2 \ucr(G')$. 

Note that $G'$ may be equivalently regarded as a graph obtained
from $G$ by deleting each edge independently with probability
$\delta$. Since $\gg$ is $(\delta,\gamma)$-stable, it follows that
w.h.p.~$\ucr(G') \ge \gamma\cdot\ucr(G)$. Therefore the expected value
of $\ucr(G,w)$ is at least $\delta^2\gamma\cdot\ucr(G)$, as required.
\end{proof}

We now proceed with a concrete illustration of how the
results and techniques on the decay of crossing numbers (specifically,
those developed in~\cite{ckt}) find an immediate application in
expected crossing numbers.

As we observed above, \v{C}ern\'y, Kyn\v{c}l and
T\'oth~\cite{ckt} proved that, for each $\epsilon\in (0,1/3)$, the family of
graphs with $\Theta(n^{1+\epsilon})$ edges is {\em not} stable. We have
slightly refined the construction
in~\cite{ckt}, and extended it to cover the case $\epsilon =
1/3$. 

\begin{theorem}[Non-stability of graphs with $\Theta(n^{4/3})$ edges]\label{thm:cktrefined}
For every $\delta,\gamma>0$ there exist $c:=c(\delta,\gamma)$ and
$n_0:=n_0(\delta,\gamma)$ such that there exist infinitely many graphs $G$ with $n >
n_0$ vertices and $c\cdot n^{4/3} < m < n^{4/3} $ edges, that satisfy the
following. If $G'$ is a random subgraph of $G$ obtained by deleting
independently each edge with probability $\delta$, then w.h.p.
$$
\ucr(G') < \gamma\cdot \ucr(G).
$$
\end{theorem}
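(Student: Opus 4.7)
The plan is to construct an infinite family of graphs $\{G_n\}$ by refining the construction used by \v{C}ern\'{y}, Kyn\v{c}l and T\'oth in~\cite{ckt} for the range $\epsilon<1/3$. I would start from a sparse base graph $H_N$ with $N$ vertices, bounded degrees, and $\ucr(H_N)$ growing with $N$ (the natural choice here is the thickening of $C_3\ \Box\ C_N$ that is sketched in the ignored block just after Conjecture~\ref{con:ran}, which demonstrates non-stability already for $m=\Theta(n\log n/\log\log n)$). I would then replace each edge of $H_N$ by a bundle consisting of $k$ parallel $uv$-paths, each of length $L$, with $L=L(\delta,\gamma)$ a sufficiently large constant and $k=k(n)$ of order roughly $n^{1/3}/L$. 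A routine double count of the vertices and edges of $G_n$ shows that, for this choice, $|V(G_n)|=n$ and the edge count $m=|E(G_n)|$ can be tuned to satisfy $c\cdot n^{4/3}<m<n^{4/3}$ for a suitable constant $c=c(\delta,\gamma)$.

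Next I would estimate $\ucr(G_n)$ from both sides. An explicit bundled drawing (all $k$ parallel paths of each bundle drawn close together, following an optimal drawing of $H_N$) yields an upper bound of the form $k^{2}\cdot\ucr(H_N)$ times an $L$-factor for the subdivisions. For the matching lower bound, I would apply the multigraph version of the crossing lemma to $G_n$, using the fact that the maximum edge-multiplicity in the natural simplification of $G_n$ is $k$: this gives $\ucr(G_n)\ge c'\cdot m^3/(n^2 k)$, which is of the same order as the upper bound.

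The core of the argument is controlling $\ucr(G'_n)$. Each length-$L$ path survives $\delta$-deletion intact only with probability $(1-\delta)^L$, which can be made arbitrarily small by choosing $L$ large in terms of $\delta$ and $\gamma$. A concentration argument (Chernoff or a second-moment estimate) then shows that, with probability tending to $1$ as $n\to\infty$, every bundle is simultaneously reduced to a small fraction of its original width, or completely destroyed. A drawing of $G'_n$ that respects this collapsed bundle structure, combined once more with the multigraph crossing lemma on the thinned remnants, gives $\ucr(G'_n)$ of order at most $\gamma\cdot\ucr(G_n)$.

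The main obstacle is threading the needle precisely at the boundary $\epsilon=1/3$. The bundle width $k$ must be large enough to push the edge count up to order $n^{4/3}$ and at the same time small enough (or the paths long enough) that random deletion genuinely thins each bundle. For $\epsilon<1/3$ there is enough slack that the CKT parameters go through comfortably, whereas at $\epsilon=1/3$ the two constraints become tight simultaneously; the refinement consists exactly in balancing $k$, $L$ and the choice of $H_N$ so that the bundle multiplicity, the path-survival probability $(1-\delta)^L$, and the constants in the multigraph crossing lemma align to still give $\ucr(G'_n)<\gamma\cdot\ucr(G_n)$ with high probability.
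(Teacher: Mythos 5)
Your construction is genuinely different from the paper's, and it has a fatal arithmetic flaw in the edge count. If you replace each edge of a bounded-degree base graph $H_N$ by $k$ vertex-disjoint $uv$-paths of length $L$, then each original edge contributes about $k(L-1)$ new vertices and $kL$ new edges, so $n\approx |E(H_N)|\cdot k(L-1)$ and $m\approx |E(H_N)|\cdot kL$, hence $m=\Theta(n)$ regardless of how you tune $k$ and $L$. Replacing edges by paths can never push the edge density above linear. If instead you interpret a ``bundle'' as a thick path (length-$L$ path whose segments each carry $k$ parallel edges, so vertices are shared), then you do get $n\approx NL$, $m\approx NkL$, and $m/n\approx k\approx n^{1/3}$, matching the claimed density; but then the fragility argument collapses: $\delta$-deletion thins each segment to width $\approx(1-\delta)k$, so the bundle still has width $\Theta(k)$ and $\ucr(G')$ is only reduced by a factor $(1-\delta)^2$, not by the arbitrary factor $\gamma$. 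The two halves of your argument (edge count and collapse of bundles) require incompatible interpretations of ``bundle.'' The $C_3\,\Box\,C_n$ thickening you mention from the paper is presented there only for the much sparser regime $m=\Theta(n\log n/\log\log n)$, where the bundles \emph{are} single parallel edges and the edge count stays near-linear; that construction does not extend to $\Theta(n^{4/3})$.

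The paper's construction (given explicitly in the proof of Theorem~\ref{thm:robbust}, which Theorem~\ref{thm:cktrefined} closely mirrors) separates the two roles into disjoint pieces. It takes $G$ to be the disjoint union of $G_1$, consisting of $n/(2r)$ copies of $K_r$ with $r=\Theta(n^{1/3})$, and $G_2$, a $K_t$ with each edge subdivided into a path of length $s$, where $s$ is a large constant and $t=\sqrt{n/s}$, plus isolated vertices to pad $|V(G)|$ to $n$. The dense islands $G_1$ supply $\Theta(n\cdot r)=\Theta(n^{4/3})$ edges while contributing only $O(\alpha^6 n^2)$ crossings; the fragile $G_2$ has only $\Theta(n)$ edges but carries $\Theta(\alpha^4 n^2)$ crossings, which dominates $\ucr(G)$. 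Under $\delta$-deletion each length-$s$ branch of $G_2$ survives with probability $(1-\delta)^s$, so w.h.p.\ almost every branch is destroyed and $\ucr(G'_2)$ collapses, while $\ucr(G'_1)\le\ucr(G_1)$ is already negligible relative to $\ucr(G)$. The key idea you are missing is precisely this decoupling: the edge density must come from a component (here the $K_r$'s) that adds many edges per vertex, which subdivided paths inherently cannot do.
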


We omit the proof of this result, since it closely resembles the proof
of our next statement. Theorem~\ref{thm:robbust} shows the
non-robustness of graphs with $\Theta(n^{4/3})$ edges, and illustrates
how the non-stability results and techniques in~\cite{ckt} can be extended to prove
the non-robustness of graphs with $\Theta(n^{1+\epsilon})$ edges for
each $\epsilon\in(0,1/3)$.

\begin{theorem}[Non-robustness of graphs with $\Theta(n^{4/3})$ edges]\label{thm:robbust}
For every $\gamma>0$ there exist $c:=c(\gamma)$ and
$n_0:=n_0(\gamma)$ such that there are infinitely many graphs $G$ with $n >
n_0$ vertices and $c\cdot n^{4/3} < m < n^{4/3}$ edges, and 
$$
\EE(\ucr(G)) < \gamma\cdot \ucr(G).
$$
\end{theorem}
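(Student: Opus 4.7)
The plan is to transport the non-stability construction of Theorem~\ref{thm:cktrefined} to the weighted setting of expected crossing numbers, via a coupling between the uniform random weighting and a Bernoulli random subgraph, followed by an embedding argument for the light-weight edges.

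First, I would reuse the family $\{G_N\}$ that witnesses Theorem~\ref{thm:cktrefined}: graphs with $n = \Theta(N)$ vertices, $m \in (c N^{4/3}, N^{4/3})$ edges, large crossing number $\ucr(G_N)$, and the ``fragility'' property that deleting each edge independently with probability $\delta$ leaves a subgraph of crossing number at most $\gamma_1 \ucr(G_N)$ with high probability, where $\gamma_1 = \gamma_1(\delta)$ can be made arbitrarily small. Such constructions typically replace each edge of a small dense base graph $H$ (with $\ucr(H) = \Theta(|V(H)|^4)$) by a planar gadget carrying $k$ internally disjoint short paths between the two endpoints, plus carefully tuned padding to place $m$ in the tight window $(c N^{4/3}, N^{4/3})$.

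Second, I would couple the random weighting with a random subgraph. For a threshold $\delta = \delta(\gamma) > 0$, let $G_\delta := \{e \in E(G_N) : w(e) \ge \delta\}$; since the $w(e)$ are uniform on $[0,1]$, $G_\delta$ is exactly the Bernoulli random subgraph in which each edge is retained with probability $1-\delta$, as used in the non-stability proof. Hence w.h.p.\ $\ucr(G_\delta) \le \gamma_1 \ucr(G_N)$. Starting from a crossing-optimal drawing of $G_\delta$, I would extend it to a drawing $D_w$ of the full $G_N$ by the embedding method of Section~\ref{sec:emb}: each light edge $e = uv$ (with $w(e) < \delta$) is redrawn close to a surviving alternative $uv$-path in its bundle, which costs at most $O(\ell \Delta)$ additional crossings per light edge. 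The weighted sum of crossings in $D_w$ then decomposes as at most $\ucr(G_\delta) \le \gamma_1 \ucr(G_N)$ from heavy--heavy crossings (of weight at most $1$ each) plus at most $\delta \cdot O(m \ell \Delta)$ from crossings involving at least one light edge (of weight less than $\delta$ each). Since $m = O(n^{4/3})$ while $\ucr(G_N)$ is polynomially larger (in the range $\Theta(n^{8/3})$ under the standard bundle parameters), the second term divided by $\ucr(G_N)$ is $O(\delta/n^c)$ for some $c>0$, and so lies below $\gamma/2$ for $N$ large; choosing $\delta$ small enough that $\gamma_1 < \gamma/2$ yields $\EE(\ucr(G_N)) < \gamma \ucr(G_N)$.

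The main obstacle is ensuring that, with high probability over the weighting, every bundle still contains at least one all-heavy path available for the re-embedding of its light edges; this is the same ``bundle survival'' estimate that powers the non-stability proof, and it forces a delicate coordination of the parameters $k$, $\ell$, $p$, and $\delta$ with the target density $m \in (c N^{4/3}, N^{4/3})$. Pinning $m$ into this tight window while still guaranteeing bundle survival is the technical heart of the construction, and is precisely why the authors can say that the proof of Theorem~\ref{thm:cktrefined} ``closely resembles'' the proof of Theorem~\ref{thm:robbust}: the quantitative balancing is essentially identical in the two settings, with random edge-deletion in one and the threshold cut $G_\delta$ in the other playing parallel roles.
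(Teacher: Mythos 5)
The paper proves Theorem~\ref{thm:robbust} by a direct explicit construction, not by a reduction from Theorem~\ref{thm:cktrefined}. The graph $G$ is the disjoint union of isolated vertices, a graph $G_1$ consisting of $n/(2r)$ disjoint copies of $K_r$ with $r=\Theta(n^{1/3})$ (this piece supplies almost all of the $\Theta(n^{4/3})$ edges but only a lower-order term of $\ucr(G)$), and a graph $G_2$ obtained from $K_t$ by subdividing every edge into a path of constant length $s=1/\alpha^2$ (this piece dominates $\ucr(G)=\Theta(n^2)$). The weighted crossing number of $G_2$ is then bounded by a \emph{fixed} convex drawing in which each branch is routed so that it is crossed only in its lightest edge; w.h.p.\ almost every branch contains an edge of weight below $\alpha$, so almost all crossings have weight below $\alpha$, and the few strong branches contribute only $O(t^4e^{-2/\alpha})$. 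There is no thresholding, no coupling with a Bernoulli subgraph, and no re-embedding.

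Your proposal diverges from this and has real gaps. First, the construction you describe (bundles of $k$ internally disjoint short paths attached to a small dense base graph) is not the paper's; here each base edge becomes a \emph{single} subdivided path, and the density window $c\,n^{4/3}<m<n^{4/3}$ is supplied by the disjoint cliques $G_1$, which your ``carefully tuned padding'' remark elides. Second, the re-embedding step does not go through: a light edge $e=uv$ interior to a subdivided branch has $u,v$ of degree $2$ with no short alternative $uv$-path, and even in a bundle the alternative paths connect the bundle's \emph{terminals}, not $u$ to $v$; the embedding method of Section~\ref{sec:emb} re-embeds the base edges of $(\ell,\Delta)$-earrings, a different object entirely. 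Third, your estimate $\ucr(G_N)=\Theta(n^{8/3})$ is wrong; from $\ucr(G)>\alpha^4 n^2/100$ (or from the Crossing Lemma applied to $m=\Theta(n^{4/3})$) one gets $\ucr(G)=\Theta(n^2)$. Finally, the threshold coupling $G_\delta=\{e:w(e)\ge\delta\}$ is indeed the right lens on why Theorems~\ref{thm:cktrefined} and~\ref{thm:robbust} resemble each other, but controlling $\ucr(G_\delta)$ alone does not bound the total crossing \emph{weight} of a drawing of all of $G$; Proposition~\ref{pro:theconnection} only gives the implication from stability to robustness, and to obtain the converse for this family you would still need to show the light edges can be drawn with small weighted cost, which is exactly the branch-routing argument the paper's construction is engineered to support and your re-embedding step does not provide.
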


\begin{proof} 
For readability purposes, we shall omit
explicitly taking the integer part of several quantities involved. The
integrality requirement will be, in every case, obvious from the context. 

We may assume without loss of generality that
$\gamma$ is small enough so that 
$e^{-1200/\gamma}<\gamma/720$.  Let $\alpha:=\gamma/600$,
$\eps:= \alpha^2/100$, $r:=\alpha^2
n^{1/3}/5$, $s:=1/\alpha^2$, and $t:=\sqrt{n/s}$. Note
that obviously $r > 5\eps n^{1/3}$. 



Inspired by the construction in~\cite{ckt}, $G$ will be the disjoint
union of two graphs 
$G_1$ and $G_2$ plus some isolated vertices. Let $G_1$ be $
n/2r$ copies of the complete graph $K_r$. Clearly $|V(G_1)|\le
n/2$.  Now let $G_2$ be obtained from a complete graph $K_t$ by
subdividing each edge $s-1$ times, i.e. replacing each edge by a path
with $s$ edges (these length $s$ paths are the {\em branches}).  It is easy to
check that
$|V(G_2)|\le n/2$. Furthermore,
\begin{equation}\label{eq:somm}
\alpha^4 n^2 =  t^4> \ucr(G_2)> \frac{t^4}{100}=
\frac{n^2}{100s^2}=\frac{ \alpha^4  n^2}{100},
\end{equation}
where the inequalities $t^4 > \ucr(G_2) > t^4/100$ are easily derived
bounds for the crossing number of the complete graph on $t$ vertices.

Now let $w$ be a random weight assignment on the edges of $G$. Let
$E_{<\alpha}$ denote the set of edges of $G$ that receive a weight smaller than
$\alpha$ under $w$. Let us say that a branch is {\em weak} if at
least one
of its edges is in $E_{< \alpha}$; otherwise the branch is {\em strong}. 

The probability that any fixed branch is strong is
$$(1-\alpha)^s\approx e^{-\alpha s}=e^{-1/\alpha}.$$

Using Chernoff's bound, w.h.p.~at most $t^2 e^{-1/\alpha}$ branches are
strong. That is, w.h.p. at least ${t\choose 2} - t^2 e^{-1/\alpha}\approx t^2(1/2 -
e^{-1/\alpha})$ branches are weak.

Now consider the drawing of $G_2$ in which the $t$ vertices of degree $t-1$
are in convex position, and the edges are the straight
segments joining them. This drawing of $G_2$ has ${t\choose 4}\approx
t^4/24$ crossings (this is by no means a crossing-minimal drawing of
$G_2$, but it is enough for our purposes). Moreover, by adjusting the
drawing of each branch if needed, we may ensure that each branch is
crossed in exactly one edge, namely the edge with smallest weight. It
follows that the number of crossings involving two strong branches
(and thus, in particular, the number of crossings of weight $\ge
\alpha$) is w.h.p.~at most $(t^2 e^{-1/\alpha})^2$, and so w.h.p.


\begin{align}
\nonumber
\ucr(G_2,w)&< t^4 e^{-2/\alpha} + \alpha\cdot
t^4(
1/24 - e^{-2/\alpha}
)
< t^4(
\alpha/24 + e^{-2/\alpha}
)\\
&< 100\ucr(G_2)(\alpha/24 + e^{-2/\alpha}) \le 5\alpha\cdot\ucr(G_2), \label{eq:somn}
\end{align} 
where for this last inequality we used that 
$e^{-1200/\gamma}=e^{-2/\alpha}<\gamma/720=(5/6)\alpha$.

We finally move on to $G$. 
First we note that $$|E(G)|=|E(G_1)|+|E(G_2)|\ge |E(G_2)|=  (n/2r) {r(r-1)}/{2}>nr/5> \eps n^{4/3}.$$
Using (\ref{eq:somm}), we obtain
\begin{equation}\label{eq:zom}
\ucr(G)=\ucr(G_1)+\ucr(G_2) > \ucr(G_2) >  \alpha^4 n^2/100.
\end{equation}
From the other side, using (\ref{eq:somm}) and (\ref{eq:somn}) and the trivial bound
$\ucr(K_r) \le r^4$, we get
\begin{equation}\label{eq:zum}
\ucr(G,w)\le \ucr(G_1)+\ucr(G_2,w)\le  
(n/2r) r^4 + 5\alpha^5 n^2 < 6\alpha^5n^2,
\end{equation}
where for the last inequality we used the (easily checked) inequality
$(n/2r) r^4 < \alpha^5n^2$.

Finally, using (\ref{eq:zom}) and (\ref{eq:zum}) and recalling that
$\alpha=\gamma/600$, we obtain
$$ \ucr(G,w) <   6\alpha^5  n^2 = (600\alpha)(\alpha^4 n^2/100) < 
\gamma\cdot \ucr(G),$$
as required.
\end{proof}


We close this subsection with two constructions that further
illustrate the discrepancy between the crossing number of a
graph and its expected crossing number.

First we describe a construction that highlights the fact that the
crossing number (of a family of graphs) may grow with the number of
vertices, and yet the expected crossing number (of all graphs in the
family) may be bounded by an absolute constant.
For any graph $G$, let $n(G)$ and $m(G)$ denote the number of vertices
and edges of $G$, respectively, and let $s\cdot G$ the graph that
consists of $s$ disjoint copies of $G$. Let $K_5(t)$ denote the graph
obtained by replacing each edge of $K_5$ with a path of length $t$ (a
{\em branch}).  Trivially, for any positive integer $s$, $n(s\cdot
K_5(t))=s(10(t-1)+5)=10st-5s, m(s\cdot K_5(t))=10st$, and $\ucr(s\cdot
K_5(t))=s.$ However, the weighted crossing number of $K_5(t)$ is $\min
w(e)w(f)$, where the minimum is taken over all pairs of edges $e,f$
that lie on branches that correspond to nonincident edges.  A fairly
standard calculation  shows that
$\EE(\ucr(s\cdot K_5(t))\le (s/t^2) \log^2 s$.  It is worthwhile to
explore the consequences of plugging in various values of $s$.
Probably the most interesting case occurs when $s=n^{2/3}/\log n$, for
this shows the following:

\begin{proposition}\label{pro:hignon}
There exists an infinite family of graphs $G$ with crossing number
$n^{2/3}/\log n$ and expected crossing number at most $1$. \hfill$\Box$
\end{proposition}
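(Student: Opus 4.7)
The plan is simply to instantiate the construction $s\cdot K_5(t)$ from the paragraph preceding Proposition~\ref{pro:hignon}, with parameters tuned so that $\ucr(G)$ is forced to be $\Theta(n^{2/3}/\log n)$ while the bound $\EE(\ucr(s\cdot K_5(t)))\le (s/t^2)\log^2 s$ still keeps the expected crossing number bounded by $1$.

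First I would set, for each sufficiently large $n$, $s:=\lfloor n^{2/3}/\log n\rfloor$ and $t:=\lfloor n^{1/3}\log n/10\rfloor$. With these choices, $n(s\cdot K_5(t))=10st-5s\le n$, and we pad with isolated vertices to reach exactly $n$ vertices. Since $K_5(t)$ is a subdivision of $K_5$, subdivisions do not change the crossing number, and crossing number is additive under disjoint union, we have $\ucr(G)=s\cdot\ucr(K_5)=s=\lfloor n^{2/3}/\log n\rfloor$ exactly, as required.

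Next I would plug these same parameters into the bound $\EE(\ucr(s\cdot K_5(t)))\le (s/t^2)\log^2 s$ recalled just before the statement. A direct calculation yields
$$\EE(\ucr(G))\ \le\ \frac{s\log^2 s}{t^2}\ =\ O\!\left(\frac{n^{2/3}/\log n}{n^{2/3}\log^2 n/100}\right)\cdot \log^2(n^{2/3}/\log n)\ =\ O\!\left(\frac{1}{\log n}\right),$$
which is at most $1$ for all sufficiently large $n$. Choosing one such graph for each sufficiently large $n$ gives the desired infinite family.

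There is essentially no obstacle here beyond checking the parameter arithmetic; the heart of the argument (the upper bound $(s/t^2)\log^2 s$ on the expected weighted crossing number of $s\cdot K_5(t)$, obtained by drawing each copy of $K_5$ so that a single prescribed pair of non-adjacent branches carries the unique crossing, and taking the minimum-weight edge in each branch) is already in place in the paragraph immediately before the statement. The only small care needed is to justify that the padding by isolated vertices affects neither $\ucr(G)$ nor $\EE(\ucr(G))$, which is immediate since isolated vertices are not incident to any edge and hence cannot participate in any crossing.
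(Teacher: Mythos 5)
Your proposal is correct and is essentially the proof the paper intends: the statement is formally marked $\Box$ because the preceding paragraph already sets up the construction $s\cdot K_5(t)$ with $\ucr=s$ and $\EE(\ucr)\le (s/t^2)\log^2 s$, and explicitly says to take $s=n^{2/3}/\log n$. Your only addition is to spell out the matching choice $t\approx n^{1/3}\log n/10$, the padding by isolated vertices, and the arithmetic giving $\EE(\ucr(G))=O(1/\log n)\le 1$, all of which is exactly the routine verification the paper leaves implicit.
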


Our final construction pertains a family of graphs that seem more
natural than the graphs constructed above. We recall that
$\c3n$ 
denotes the Cartesian product of the cycles of sizes $3$ and $n$ (see
Figure~\ref{fig:figc3n}). 


\begin{proposition}\label{pro:lincro}
The Cartesian products $\c3n$ satisfy
$$
\ucr(\c3n) = n,
$$
and yet
$$
\EE(\ucr(\c3n)) \le 2n^{2/3} \log^{1/3} n + 3.
$$ 
\end{proposition}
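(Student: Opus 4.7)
The plan is to split the proof into two claims. For the equality $\ucr(\c3n) = n$, the upper bound is witnessed by the standard drawing of $\c3n$ with three concentric $n$-cycles and $n$ triangular ``spokes'': at each angular position, the ``long'' spoke joining the outer and inner cycles crosses exactly one edge of the middle cycle, for a total of $n$ crossings. The matching lower bound $\ucr(\c3n) \ge n$ is a classical result of Ringel, reproved by Kle\v{s}\v{c} and others, which I would simply cite.

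For the bound on $\EE(\ucr(\c3n))$, the key structural observation is that for any fixed position $j^* \in \{0, 1, \ldots, n-1\}$, deleting the three horizontal edges $h_{0, j^*}, h_{1, j^*}, h_{2, j^*}$ (one in each horizontal $n$-cycle, joining position $j^*$ to position $j^*+1$) yields a graph isomorphic to $C_3 \Box P_n$, which admits a planar embedding as a sequence of $n$ nested triangles $T_0 \supset T_1 \supset \cdots \supset T_{n-1}$, with three radial ``bridges'' between each consecutive pair of triangles (verified by induction on $n$, extending the planar triangular-prism embedding).

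Given a random weight assignment $w$, I would choose $j^* := \arg\min_j S_j$ where $S_j := w(h_{0, j}) + w(h_{1, j}) + w(h_{2, j})$, and draw $\c3n$ as follows: draw $\c3n \setminus \{h_{0, j^*}, h_{1, j^*}, h_{2, j^*}\}$ in its nested-triangles planar embedding (re-indexing positions so $j^* = n-1$), then add the three cut edges $h_{i, j^*}$ as radial curves from the innermost triangle $T_{n-1}$ out to $T_0$, each offset slightly from the corresponding bridge axis to avoid passing through vertices. A direct geometric check shows that each such radial curve crosses exactly one edge of each intermediate triangle $T_k$ for $1 \le k \le n-2$, and crosses neither any bridge nor the other two cut edges; thus the resulting drawing $D_{j^*}$ has exactly $3(n-2)$ crossings, every one of which involves a cut edge. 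Consequently,
\[
\mathrm{wcr}(D_{j^*}, w) \;\le\; (n-2)\,S_{j^*},
\]
since each crossing pairs a cut edge (with the three cut edges contributing total weight $S_{j^*}$) with one intermediate triangle edge of weight at most $1$.

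It remains to bound $\EE[S_{j^*}] = \EE[\min_j S_j]$. Each $S_j$ is a sum of three independent $U[0,1]$ weights, so $P(S_j \le s) = s^3/6$ for $s \in [0,1]$; moreover the $S_j$'s involve pairwise disjoint edges, hence are mutually independent. Thus $P(\min_j S_j > s) \le (1-s^3/6)^n \le e^{-ns^3/6}$, and integrating gives $\EE[\min_j S_j] \le \int_0^\infty e^{-ns^3/6}\,ds + O((5/6)^n) = (6/n)^{1/3}\Gamma(4/3) + o(1) = O(n^{-1/3})$. Combining, $\EE(\ucr(\c3n)) \le (n-2)\cdot O(n^{-1/3}) = O(n^{2/3})$, which is strictly stronger than the stated bound $2n^{2/3}\log^{1/3} n + 3$ (the additive $3$ absorbs the small-$n$ edge cases, and a conservative tail bound easily recovers the $\log^{1/3} n$ factor if one prefers to avoid the $\Gamma$-function estimate). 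The main obstacle is the geometric verification of the nested planar embedding of $C_3 \Box P_n$ and the precise count of $n-2$ crossings per radial cut edge; this amounts to a careful planar-topology argument about nesting levels, routine once the angular offsets of the cut edges are properly specified.
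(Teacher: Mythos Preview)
Your proposal is correct and follows essentially the same approach as the paper: both arguments construct a drawing of $\c3n$ in which every crossing involves one of the three ``horizontal'' edges at a single position $j$, then choose $j=j^*$ minimizing the weight sum $S_j$, and bound $\ucr(G,w)$ by roughly $n\cdot S_{j^*}$; both then use $P(S_j\le s)=s^3/6$ and the independence of the $S_j$'s to control $\min_j S_j$.

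The one notable difference is in the tail analysis. The paper uses a crude two-case split: it picks a threshold $t=6^{1/3}n^{-1/3}\log^{1/3}n$ so that $P(\min_j S_j>t)\le 1/n$, and on the complementary event bounds $S_{j^*}\le 3$ trivially; this is where the $\log^{1/3}n$ factor and the additive $+3$ come from. Your integration of the tail $\int_0^\infty e^{-ns^3/6}\,ds=(6/n)^{1/3}\Gamma(4/3)$ gives $\EE[\min_j S_j]=O(n^{-1/3})$ directly, yielding the sharper bound $\EE(\ucr(\c3n))=O(n^{2/3})$ with no logarithmic loss. So your argument actually proves a bit more than the stated proposition. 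The drawing you describe (nested triangles with three radial cut edges, each crossing $n-2$ triangle edges) differs slightly from the paper's (which exhibits $3n-4$ crossings via Figure~\ref{fig:figc3n}), but both satisfy the same key property that all crossings involve the chosen cut set $E(j^*)$.
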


\def\kaynoconstant{{{\log n}/(\log{\log n})}}
\def\invthreekaynoc{{(\log{\log n})/{3\log n})}}
\def\kay{{{\log n}/(10\log{\log n})}}
\def\invthreekay{{10\log{\log n}/{3\log n}}}

\begin{proof}
The
vertices of $\c3n$ can be labeled $v_{i,j}$, $0\le i \le 2$, $0 \le j
\le n-1$, so that there is an edge joining $v_{i,j}$ and $v_{i',j'}$ if
and only if either (i) $j=j'$ and $|i-i'|=1$ or 
(ii) $i=i'$ and $|j-j'|=1$ (indices are modulo $n$). For $j=0,1,\ldots,n-1$, let
$V_j:=\{
v_{i,j}\ | \ i\in \{0,1,2\}
\}$. That is, the $V_j$s are the vertex sets of the $3$-cycles. For
$j=0,1,\ldots,n-1$,  let $E(j)$ denote the set of (three) edges with
an endpoint in $V_j$ and another endpoint in $V_{j+1}$. 

\begin{center}
\begin{figure}[htb]
 	\scalebox{0.6}{\input{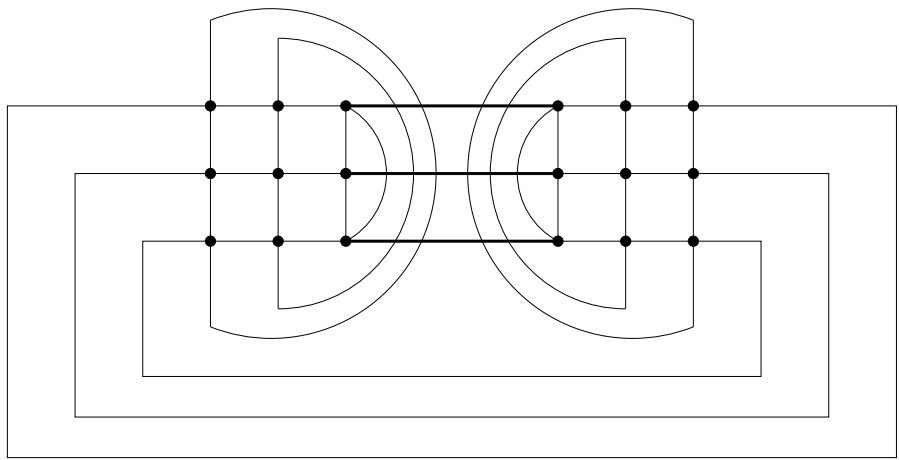}}         
	\caption{A drawing of $C_3\ \Box\ C_6$ with $14$
          crossings, where the thick edges are the edges of one
          particular $E(j)$. 
This is easily generalized to obtain, for every
          even integer $n \ge 2$, a (not crossing-minimal) drawing of $C_3\ \Box\ C_n$ with
          $3n-4$ crossings with the following property: there exists a
          $j\in\{0,1,2,\ldots,n-1\}$ such that  each crossing
          involves an edge in $E(j)$.}
\label{fig:figc3n}
\end{figure}
\end{center}

It is
known that $\ucr(C_3\ \Box\ C_n) = n$ for every $n \ge
3$~\cite{c3n}.  
In 
Figure~\ref{fig:figc3n} we depict how to produce a (not crossing-minimal) drawing of
$C_3\ \Box \ C_n$ with $3n-4$ crossings, for every even integer $n\ge
2$, with the following property: there is a $j\in \{0,1,2\ldots,n-1\}$
such that every crossing involves an edge in $E(j)$ (the edges in
$E(j)$ are the thick edges in Figure~\ref{fig:figc3n}).
Thus, 

\vglue 0.2 cm
{
\leftskip=40pt
\rightskip=40pt
\begin{itemize}
\item[(A)] 
if the edges in $\c3n$ are  are weighted, and there exists
  a $j$ such that the sum of the weights of the edges in $E(j)$ is
  $r$, then such a weighted $\c3n$ has
crossing number at most $r\cdot n$.
\end{itemize}
\par}
\vglue 0.2 cm

For $j=0,1,\ldots,n-1$, denote the weights of the edges in
$E(j)$ by $x^j_{1}, x^j_{2},x^j_3$. 
We have for
$t\le 1$ that
$\proba(x^j_1+x^j_2+x^j_3 > t) =1-t^{3}/3!$. 
Using independence,
$$ 
\proba(\exists j: x^j_1+x^j_2+x^j_3\le t)=
1-(1-t^{3}/6)^n \approx
1-\exp[-nt^{3}/6].
$$

Choosing $t=
6^{1/3} n^{-1/3} \log^{1/3} n
$, this is at least
$1-1/n$.

Now let $s:=\min\{x^j_1 + x^j_2 + x^j_3 \ |
\ j\in\{0,1,\ldots,n-1\}\}$. Thus $s\le t$ with probability at least
$1-1/n$.  In the complementary scenario (which occurs with probability
$<1/n$), $s$ is obviously at most $3$.  Using this observation
together with (A), it follows that
$\EE(\ucr(\c3n))
<\bigl[
(1-1/n) (
(6)^{1/3} n^{-1/3} \log^{1/3} n
) 
+ (1/n)3
\bigr] 
\cdot n 
$
$
< 2 n^{2/3} \log^{1/3} n + 3.
$
\end{proof}


\subsection{Concentration of the expected crossing number and the crossing number
of randomly sparsened graphs}

Continuing in the theme of expected crossing numbers and its interplay
with the decay of crossing numbers, we finally explore the
concentration around the crossing number of a randomly sparsened graph, as well
as the concentration around the expected crossing number of a graph.

Denote $R=R(G,p)$ the random graph obtained from $G$ by randomly and
independently removing edges, each with probability $p$. Using a standard
martingale concentration inequality we show that $\ucr(R)$
is concentrated around its mean.
Let $E(G)=\{e_1,\ldots,e_m\}$, and consider the random variable $\ucr(R)$ as
a Doob's martingale, where the edges are exposed one by one. The length of
the martingale is $|E(G)|$. Removing or adding an edge changes the crossing
number by at most $|E(G)|$. Thus, by the Azuma-Hoeffding's inequality, for
every $\lambda>0$ we have

\begin{equation}\label{cont1}
\proba[|\EE(\ucr(R))-\ucr(R)|> \lambda] \le \exp \biggl[\frac{-\lambda^2}{2|E(G)|^3}\biggr].
\end{equation}

Let $\beta(n)$ be any function tending to infinity. Inequality
\eqref{cont1} shows concentration with radius
$\lambda=\beta(n)|E(G)|^{3/2}$:

\begin{equation}\label{cont2}
\proba[|\EE(\ucr(R))-\ucr(R)|> \beta(n)|E(G)|^{3/2}] \le \exp \biggl[\frac{-\beta(n)^2}{2}\biggr].
\end{equation}

Similary, we can get concentration around the expected crossing number.
Assign to each edge a random variable taking values from $[0,1]$ (which
could be different for each edge),
which provides to each of them a random weight. Formally, it could be a
function $w: E(G) \to \FF$,
where $\FF$ is a collection of random variables taking values from $[0,1]$.
Then $\EE(cr(G,w))$ is the expected crossing number for a given $w$, and
$\ucr(G,w)$ is
a random variable, which is the crossing number of a weighted graph $G$.
As with the random graph $R$ above, resampling the weight of one edge changes the weighted crossing
number by at most $|E(G)|$, and so we obtain:

\begin{equation}\label{cont3}
\proba[|\EE(\ucr(G))-\ucr(G,w)|> \beta(n)|E(G)|^{3/2}] \le \exp \biggl[\frac{-\beta(n)^2}{2}\biggr].
\end{equation}

These inequalities are meaningful only when $G$ is dense enough, i.e.~$|E(G)|\ge n^{5/4}.$
Note that we could have obtained sharper concentration results for
sparse graphs, under the assumption that removing any edge makes the crossing
number drop by $o(|E(G)|)$. 

\section{Concluding remarks}\label{sec:conrem}



Lemma~\ref{lem:workhorse} falls into the realm of light subgraphs. We
recall that the {\em weight} of a subgraph $H$ of a graph $G$ is the sum of the degrees
(in $G$) of its vertices. 
For a class $\gg$  of graphs, define $w(H,
\gg)$ as the smallest
integer $w$ such that each graph $G\in\gg$ which contains a subgraph 
isomorphic to $H$ has a subgraph isomorphic to $H$ of weight at most
$w$.
If $w(H, \gg)$ is finite then $H$ is {\em light} in $\gg$.

Fabrici and Jendrol'~\cite{fabricijendrol} proved that paths (and no
other connected graphs) are light
in the class of $3$-connected planar graphs. Fabrici et
al.~\cite{fabricietal} proved that this remains true even if the
minimum degree is at least $4$, and Mohar~\cite{moharlight} extended
this to $4$-connected planar graphs.

Although some cycles are light in certain families of planar graphs (see
for instance~\cite{jendrol1,jendrol2,madaras,mohar2}), it is easy to
see  that cycles are not light on the
class of planar graphs (consider, for instance, a wheel $W_n$ with $n$
large: each cycle in $W_n$ is either very long or incident with a
large degree vertex). However, as Richter and Thomassen illustrated
in~\cite{rt}, for some applications one does not need the full
lightness condition. A cycle $C$ in a graph is $(\ell,\Delta)$-{\em
  nearly light} if it has length less than $\ell$ and at most one of its
vertices has degree $\Delta$ or greater. Richter and Thomassen proved
that every planar graph has a $(6,11)$-nearly light cycle. This was
later refined in~\cite{lomelisalazar}, where it was shown that if the graphs
under consideration are sufficiently large, then there is a $\Delta>0$
such that a linear
proportion of the face boundaries are $(6,\Delta)$-nearly light.

The concept of $(\ell,\Delta)$-earrings extends the idea of nearly
light cycles: we allow both vertices $u,v$ incident with some edge $e$
to have arbitrarily large degree, and ask for the existence of {\em
  two} cycles that contain $e$, have bounded length, and (other than
$u$ and $v$) bounded degree. The following immediate corollary (since
every $3$-connected graph is obviously 
irreducible) of
Lemma~\ref{lem:earnonplanar}  guarantees the existence of many pairwise
edge-disjoint earrings in $3$-connected planar graphs.

\begin{lemma}\label{lem:3con}
If $G=(V,E)$ is a $3$-connected planar graph, then
$G$ has at least 
$\xcoonenum|E|$ 
pairwise edge-disjoint
$(5000,500)$-earrings.
\end{lemma}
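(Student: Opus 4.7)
The plan is to deduce this as an immediate corollary of Lemma~\ref{lem:earnonplanar}. That lemma gives at least $\xcoonenum|E| - \xcoSevennum \cdot \ucr(G)$ pairwise edge-disjoint $(5000,500)$-earrings in any 2-connected irreducible graph, so the task reduces to verifying that a 3-connected planar graph satisfies those two hypotheses and then observing that the error term vanishes because $\ucr(G)=0$.

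The 2-connectivity is immediate from 3-connectivity. For irreducibility, I would recall that the substitution operation behind reducibility (replacing a $\{u,v\}$-bridge $H$ for which $(H,u,v)$ is a $uv$-blob by $k$ parallel $uv$-edges, where $k$ is the width) is only available when $\{u,v\}$ is a 2-vertex-cut of $G$, since otherwise there is no second bridge on the other side of $\{u,v\}$ to preserve when $H$ is collapsed. A 3-connected graph has no 2-vertex-cut, so the simplification can never be carried out and $G$ is irreducible. This is exactly the observation the authors make right after Theorem~\ref{thm:main2} to justify that Theorem~\ref{thm:main2} applies to all 3-connected graphs. Since $G$ is planar, $\ucr(G)=0$, and plugging everything into Lemma~\ref{lem:earnonplanar} produces at least $\xcoonenum|E|$ pairwise edge-disjoint $(5000,500)$-earrings, as required.

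There is no real obstacle here: the content of the lemma is entirely contained in Lemma~\ref{lem:earnonplanar}, and the statement is included principally to record the clean form in which that result specializes to 3-connected planar graphs, where the $\xcoSevennum\cdot\ucr(G)$ correction disappears. If anything merits a second thought, it is the claim that 3-connectivity forces irreducibility, but as sketched above this is a one-line consequence of the definitions.
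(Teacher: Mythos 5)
Your proof is correct and is exactly the paper's argument: the authors present Lemma~\ref{lem:3con} as an immediate corollary of Lemma~\ref{lem:earnonplanar}, invoking the observation that $3$-connected graphs are irreducible and that $\ucr(G)=0$ for planar $G$, so the $\xcoSevennum\ucr(G)$ term vanishes. Your elaboration of why $3$-connectivity forces irreducibility (no $2$-vertex-cut, hence no blob to collapse) is the same one-line justification the paper records after Theorem~\ref{thm:main2} with the word ``trivially.''
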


We remark that the linear dependence on $|E|$ in Lemma~\ref{lem:3con}
is clearly best possible, since there cannot be more pairwise
edge-disjoint earrings than edges in a graph.

Finally, it is natural to ask if the $3$-connectedness requirement can
be weakened.  The construction illustrated in Figure~\ref{fig:counter}
answers this in the negative.

\begin{center}
\begin{figure}[htb]
 	\scalebox{0.36}{\input{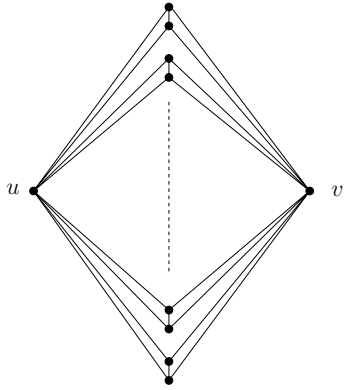}}         
	\caption{The graph $H_n$ obtained by identifying $n$ copies of
          $K_4-e$ on their degree $2$ vertices $u,v$. This family of
          $2$-connected graphs shows that the
          $3$-connectedness condition in Lemma~\ref{lem:3con}
          cannot be
          weakened: for each pair of integers $\ell, \Delta$ there is an
          $n_0:=n_0(\ell,\Delta)$ such that for all $n \ge n_0$, $H_n$
          does not contain any $(\ell,\Delta)$-earring.}
\label{fig:counter}
\end{figure}
\end{center}


It might be argued that the graphs constructed in the proof of
Theorem~\ref{thm:robbust} are somewhat artificial, since many edges
are subdivided a large number of times.  However, these graphs can be
turned into $3$-connected graphs, with equivalent properties, as
follows. Consider the graph $G_2$ in the proof of
Theorem~\ref{thm:robbust}, and some fixed drawing of $G_2$ (for
instance, as in the proof of Theorem~\ref{thm:robbust}, draw the degree
$t-1$ vertices on a circumference, and the branches as the straight
edges joining them).  Let $u_1, u_2, \ldots, u_t$ be the {\em nodes}
(degree $t-1$ vertices) of $G_2$. Thus each branch with endpoints
$u_i, u_j$ can be written as $u_i = u_{i,j}^0, u_{i,j}^1, \ldots,
u_{i,j}^{s-1}, u_{i,j}^s = u_j$ (the same branch, traversing the
vertices in the reverse order, reads $u_j = u_{j,i}^0, u_{j,i}^1,
\ldots, u_{j,i}^{s-1}, u_{j,i}^s = u_i$, so that $u_{i,j}^k =
u_{j,i}^{s-k}$ for $k=0,1,\ldots,s$). Now for each branch $u_{i,j}^0,
u_{i,j}^1, \ldots, u_{i,j}^{s-1}, u_{i,j}^s$, add the edges
$u_{i,j}^k$ and $u_{i,j}^{k+2}$, for $k=0,1,\ldots, s-2$. The
augmented graph is already $2$-connected, but each pair of nodes (that
is, degree $t-1$ vertices) is a $2$-vertex-cut, so we need to
strenghten the connectivity around each node. Consider the node $u_1$,
and suppose for simplicity that the edges $u_1 u_{1,2}^1, u_1
u_{1,3}^1, \ldots, u_1 u_{1,t}^1$ leave $u_1$ in the given (say
clockwise) cyclic order. Then, for each $j=2,3,\ldots, s$, it is
possible to draw an edge from one of $u_{1,j}^1$ and $u_{1,j}^2$ to
one of $u_{1,{j+1}}^1$ and $u_{1,j+1}^2$
without introducing any crossings
(indices are read modulo $s$). By performing this procedure around each
node, we obtain a $3$-connected graph that also witnesses 
Theorem~\ref{thm:robbust}.
The proof is analogous to the proof of Theorem~\ref{thm:robbust}; the only difference  is that instead of requiring
a weak edge of a branch (say between $u_i$ and $u_j$), we need weak triplets
of edges of the form $(u_{i,j}^\ell, u_{i,j}^{\ell+1}),
(u_{i,j}^{\ell-1}, u_{i,j}^{\ell+1}),(u_{i,j}^{\ell}, u_{i,j}^{\ell+2})$,
 where $3\le \ell\le s-3$; we omit the details.

\section*{Acknowledgments}

We thank Bruce Richter and G\'eza T\'oth for very helpful discussions.


\begin{thebibliography}{99}


\bibitem{ajtai} M.~Ajtai, V.~Chv\'atal, M.M.~Newborn, and
  E.~Szemer\'edi,  Crossing-free subgraphs, Theory and
practice of combinatorics, North-Holland Math.~Stud., vol.~60, 
North-Holland, Amsterdam, 1982, pp. 9ô"€˜-12.


\bibitem{dragoconst} 
D.~Bokal,  Infinite families of crossing-critical graphs with
prescribed average degree and crossing number. {\em J.~Graph Theory} {\bf 65} (2010), 139-ô"€˜162. 

\bibitem{bors} D.~Bokal, B.~Oporowski, R.B.~Richter, and G.~Salazar,
  Classification of $2$-crossing-critical graphs. Manuscript (2011).

\bibitem{ckt0} J.~\v{C}ern\'{y}, J.~Kyn\v{c}l and G.~T\'oth,
  Improvement on the decay of crossing numbers. {\em Lecture Notes in
    Comput. Sci.} {\bf 4875}, 25--30. Springer, Berlin, 2008. 

\bibitem{ckt} J.~\v{C}ern\'{y}, J.~Kyn\v{c}l and G.~T\'oth,
  Improvement on the decay of crossing numbers. {\em Graphs and
    Combinatorics}, to appear.

\bibitem{dotv} G.~Ding, B.~Oporowski, R.~Thomas, and D.~Vertigan,
  Large non-planar graphs and an application to crossing-critical
  graphs. {\em J. Combin. Theory Ser. B} {\bf 101} (2011),  111ô"€˜-121, 

\bibitem{dvorakmohar} Z.~Dvo\v{r}\'ak and B.~Mohar, Crossing--critical graphs
  with large maximum degree. {\em J. Combin. Theory Ser. B} {\bf 100}
  (2010), 413-417.

\bibitem{fabricijendrol} I.~Fabrici and S.~Jendrol', Subgraphs with 
restricted degrees of their vertices in
planar 3-connected graphs, {\em Graphs Comb.} {\bf 13} (1997) 245--250.

\bibitem{fabricietal} I.~Fabrici, E.~Hexel, S.~Jendrol',
  H.~Walther, On vertex-degree restricted paths in polyhedral graphs. Graph theory (Dörnfeld, 1997). Discrete Math. 212 (2000), no. 1-2, 61ô"€˜-73.

\bibitem{foxtoth} J.~Fox and Cs.~D.~T\'oth,  On the decay of crossing
  numbers.  {\em J. Combin. Theory Ser. B} {\bf  98}  (2008),  no. 1, 33--42.

\bibitem{jendrol1} S.~Jendrol', T.~Madaras, R.~Sot\'ak, and Zs.~Tuza,
  On light cycles in plane triangulations. {\em Discrete Math.} {\bf 197/198} (1999), 453ô"€˜-467

\bibitem{jendrol2} S.~Jendrol', P.J.~Owens, On light graphs in
  3-connected plane graphs without triangular or quadrangular faces. {\em 
Graphs Combin.} {\bf 17} (2001), 659-ô"€˜680. 

\bibitem{lei} T. Leighton, {\em Complexity issues in VLSI}. MIT Press, Cambridge, MA, 
1983.


\bibitem{lomelisalazar} M.~Lomeli and G.~Salazar,  Nearly light cycles
  in embedded graphs and crossing-critical graphs. {\em J. Graph
    Theory} {\bf 53} (2006),  151-ô"€˜156. 

\bibitem{madaras} T.~Madaras, R.~\v{S}krekovski, H.-J.~Voss, The
  7-cycle $C_7$ 
is light in the family of planar graphs with minimum degree $5$, {\em
  Discrete Math.} {\bf 307} (2007), no. 1430-ô"€˜1435. 

\bibitem{moharlight} B.~Mohar, Light paths in 4-connected graphs in the
  plane and other surfaces. {\em  J. Graph Theory} {\bf 34} (2000), 170ô"€˜-179. 

\bibitem{mohar2} B.~Mohar, R.~\v{S}krekovski, H.-J.~Voss, Light
  subgraphs in planar graphs of minimum degree $4$ and edge-degree
  $9$. {\em  J. Graph Theory} {\bf 44} (2003), 261ô"€˜-295.

\bibitem{mohartamon} B.~Mohar and S.~Tamon, Expected crossing numbers, 
{\em Electronic Notes in Discrete Mathematics} {\bf 38} (2011),
651--656.

\bibitem{mohartamon2} B.~Mohar and S.~Tamon, Expected crossing numbers, 
{\tt arXiv:1012.2579v1 [math.CO]}.


\bibitem{prtt} J.~Pach, R.~Radoi\v{c}i\'{c}, G.~Tardos, and
  G.~T\'{o}th, 
Improving the crossing lemma by finding more crossings in sparse
graphs, {\em Discrete and Comp.~Geom.} {\bf 36} (2006), 527--552.

\bibitem{pt} J.~Pach and G.~T\'oth, Which crossing number is it
  anyway? {\em Journal of Comb.~Theory Ser.~B} {\bf 80} (2000), 225--246.

\bibitem{rt} R.B.~Richter and C.~Thomassen, Minimal graphs with
  crossing number at least $k$. { \em J. Combin. Theory Ser. B} {\bf 58} (1993), 217-ô"€˜224. 

\bibitem{c3n} R.D.~Ringeisen and L.W.~Beineke, The crossing number of
  $C_3\times C_n$, {\em Journal of Combinatorial Theory, Series B} {\bf 24}
  (1978), 134--136.

\bibitem{sssv} F.~Shahrokhi, O.~S\'ykora, L.~Sz\'ekely, and I.~Vrt'o, Crossing numbers: 
bounds and applications. In: {\em Intuitive geometry} (Budapest, 1995). 
Bolyai Soc.~Math.~Stud.~{\bf 6} Budapest, 1997. 179--206.

\bibitem{spencertoth} J.~Spencer and G.~T\'oth, Crossing numbers of
  random graphs, {\em Random Structures and Algorithms}
{\bf 21} (2002), 347--358.

\bibitem{ls} L.~Sz\'ekely, Short proof for a theorem of Pach, Spencer, and T\'oth. In 
{\em Towards a theory of geometric graphs}. Contemporary Mathematics 
{\bf 342}, AMS, Providence, RI, 2004,, 281--283.

\end{thebibliography}
\end{document}